\documentclass[a4paper, reqno, 12pt]{amsart}
\usepackage[utf8]{inputenc}
\usepackage[T1, T2A]{fontenc}
\usepackage[english]{babel}
\usepackage{amsmath, amssymb, amsfonts, amsthm, amscd, mathrsfs}%,stmaryrd}
\usepackage{enumitem}
\usepackage[hidelinks,hypertexnames=false]{hyperref}
\usepackage[usenames]{color}
\usepackage{longtable}
\setlist[enumerate]{label = (\alph*), ref=(\text{\alph*)}}
\setlist[itemize]{nolistsep,leftmargin=1.5\parindent}
\usepackage{ifthen}
\usepackage{multicol}
\usepackage{paracol}
\usepackage{comment}

\usepackage{epsfig}
\usepackage{graphicx}
\usepackage[cmtip,arrow]{xy}
\usepackage{bm}
\renewcommand{\phi}{\varphi}
\renewcommand{\ge}{\geqslant}
\renewcommand{\le}{\leqslant}

\newcommand{\KK}{\mathbb{K}}

\newcommand{\ZZ}{\mathbb{Z}}

\newcommand{\GG}{\mathbb{G}}
\newcommand{\PP}{\mathbb{P}}
\newcommand{\MMM}{\mathcal{M}}
\newcommand{\mm}{\mathfrak{m}}

\DeclareMathOperator{\GL}{GL}

\DeclareMathOperator{\Aut}{Aut}

\DeclareMathOperator{\Soc}{Soc}

\DeclareMathOperator{\Aff}{Aff}

\DeclareMathOperator{\Char}{char}

\theoremstyle{plain}

\newtheorem{proposition}{Proposition}
\newtheorem{theorem}{Theorem}
\newtheorem{corollary}{Corollary}
\theoremstyle{definition}

\theoremstyle{remark}
\newtheorem{remark}{Remark}

\usepackage{scalerel}
\newcommand{\ellipsesym}{\raisebox{1pt}{\vstretch{0.5}{\bigcirc}}}
\newcommand{\quartersym}{%
\mathord{%
    \vcenter{\hbox{%
        \begin{tikzpicture}[baseline=0pt, scale=0.25, rotate=45]
            \draw (0,0) -- (1,0) arc (0:90:1) -- cycle;
        \end{tikzpicture}%
}}}}
\newcommand{\hollowstar}{%
    \mathord{%
        \vcenter{\hbox{%
            \begin{tikzpicture}[baseline=0pt, scale=0.9]
                \node[star, star points=5, star point ratio=2.5, draw, 
                      inner sep=0pt, minimum size=2ex] {};
            \end{tikzpicture}%
        }}%
    }%
}

\usepackage{tikz}
\usetikzlibrary{calc}
\usetikzlibrary{shapes.geometric}
\usetikzlibrary{arrows.meta}
\usetikzlibrary{patterns}
\tikzstyle{nofill} = [fill=none,draw = black]
\tikzstyle{precornfill} = [fill=black!10, draw = black]
\tikzstyle{cornfill} = [fill=black!25, draw = black]

\tikzstyle{myyellow} = [circle, draw, color=yellow!75, fill=yellow!75, minimum size = 1.2*\d]
\newcommand{\myyellowform}{$\bigcirc$}
\tikzstyle{myblue} = [isosceles triangle, isosceles triangle apex angle=60, shape border rotate=90, yshift=-\d/4, draw, color=blue!30, fill=blue!30, minimum size = \d]
\newcommand{\myblueform}{$\triangle$}
\tikzstyle{mygreen} = [rectangle, draw, color=green!80!black!75!white, fill=green!80!black!75!white, minimum size = \d]
\newcommand{\mygreenform}{$\square$}
\tikzstyle{mypink} = [diamond, aspect=0.5, draw, color=red!40, fill=red!40, minimum width = 0.6*\d, minimum height = 1.2*\d]
\newcommand{\mypinkform}{$\Diamond$}
\tikzstyle{myorange} = [ellipse, draw, color=red!50!yellow!50!white, fill=red!50!yellow!50!white, minimum width = 1.2*\d, minimum height = 0.6*\d, inner sep=0pt]
\newcommand{\myorangeform}{$\ellipsesym$}
\tikzstyle{mycyan} = [star, star point ratio=2.5, draw, color=cyan!50!white, fill=cyan!50!white, minimum size = \d] 
\newcommand{\mycyanform}{$\hollowstar$}
\tikzstyle{myviolet} = [circular sector, circular sector angle=90, draw, color=magenta!50!blue!40!white, fill=magenta!50!blue!40!white, rotate=-90, minimum size = \d]
\newcommand{\myvioletform}{$\quartersym$}

\newcommand\drawcell[6]{%
    \filldraw[#4] ($2*#1*(e1)+2*#2*(e2)$) + (-\d,-\d) rectangle ++(\d,\d);
    \node[#5, scale=(#6+1)/2] (T) at ($2*#1*(e1)+2*#2*(e2)$) {}; 
    \draw ($2*#1*(e1)+2*#2*(e2)$) node{#3};
}

\newcommand\mymonom[2]{
    \ifnum#1>1
        \ifnum#2>1 x^#1y^#2
        \else \ifnum#2=0 x^#1 \else x^#1y^{\phantom{1}}\!\! \fi\fi
    \else \ifnum#1=0 
        \ifnum #2>1 y^#2
        \else \ifnum #2=0 1^{\phantom{1}}\!\! \else y^{\phantom{1}}\!\! \fi\fi
    \else
        \ifnum#2>1 xy^#2
        \else \ifnum#2=0 x^{\phantom{1}}\!\! \else xy^{\phantom{1}}\!\! \fi\fi
    \fi\fi
}

\newcounter{num}
\newcommand{\newstep}{\refstepcounter{num}\arabic{num}}

\begin{document}

\title[AN INFINITE SERIES OF GORENSTEIN LOCAL ALGEBRAS]{An infinite series of Gorenstein local algebras \\failing the affine homogeneity property}
\author{Roman Avdeev}
\address{%
{\bf Roman Avdeev} \newline\indent HSE University, Faculty of Computer Science, Pokrovsky Boulevard 11, Moscow, 109028 Russia}
\email{suselr@yandex.ru}
\author{Yulia Zaitseva}
\address{%
{\bf Yulia Zaitseva} \newline\indent HSE University, Faculty of Computer Science, Pokrovsky Boulevard 11, Moscow, 109028 Russia}
\email{yuliazaitseva@gmail.com}

\thanks{This article is an output of a research project implemented as part of the Basic Research Program at HSE University.}

\subjclass[2020]{Primary 13E10; \ Secondary 13H10, 14L30}
%14L30: Group actions on varieties or schemes (quotients)
%13E10: Artinian rings and modules, finite-dimensional algebras
%13H10: Special types of local and semilocal rings (Cohen-Macaulay, Gorenstein, Buchsbaum, etc.)

\keywords{Gorenstein algebra, affine homogeneity, additive action, projective hypersurface}

\begin{abstract}
We provide an infinite series of commutative finite-dimensional Gorenstein local algebras $A_n$ for $n \ge 2$. We give an elementary proof that the maximal ideal of every algebra $A_n$ possesses a one-dimensional subspace that is different from the socle and invariant under the automorphism group of~$A_n$. The latter implies that the algebras~$A_n$ fail the affine homogeneity property. We also discuss some consequences concerning additive actions on projective hypersurfaces, related to the generalized Hassett–Tschinkel correspondence for these algebras.
\end{abstract}

\maketitle

%%%%%%%%%%%%%%%%%%%%%%%%%%%%%%%%%%%%%%
\section{Introduction}

We study commutative finite-dimensional local algebras over a field and their automorphism groups. Such algebras naturally arise in various mathematical contexts, including deformation theory, singularity theory, commutative algebra, and algebraic geometry. Their automorphism groups provide valuable insight into the symmetries of the underlying geometric or algebraic objects. 

Throughout this paper, we work over a field $\KK$ whose characteristic is denoted by~$\Char \KK$. All algebras are assumed to be associative, commutative, and with unity.

Let $A$ be a finite-dimensional local algebra over $\KK$ with maximal ideal~$\mm$. Let $\Aut(A)$ (resp.~$\Aut(\mm)$) denote the automorphism group of~$A$ (resp.~$\mm$). Recall that the \textit{socle} of~$A$ is the subspace $\Soc A = \{x \in A \mid x\mm = 0\}$. The algebra $A$ is called \emph{Gorenstein} if $\dim \Soc A = 1$. In this case, every subspace $U \subseteq \mm$ satisfying $\mm = U \oplus \Soc A$ is called a \emph{complementary hyperplane}. 

For a Gorenstein algebra~$A$, certain geometric properties in the case~$\Char \KK = 0$ were studied by Fels and Kaup in~\cite{FK} and also by Isaev in~\cite{Isa}.  
Namely, given a complementary hyperplane $U \subseteq \mm$, one defines the \emph{nil-hypersurface} ${S_U = \ln(1 + U) \subseteq \mm}$, where $\ln$ denotes the standard logarithm series $\ln(1+\nobreak x) = \sum\limits_{k\ge1} (-1)^{k+1} \frac{1}{k}x^k$ and the sum is finite when applied to a nilpotent~$x$. Clearly, $S_U$ is a smooth affine hypersurface in~$\mm$. 
Consider the group of bijective affine transformations $\Aff(\mm) = \GL(\mm) \ltimes \mm$. A nil-hypersurface $S_U \subseteq \mm$ is called \emph{affinely homogeneous} if the  group $\Aff(S_U) = \{\phi\in \Aff(\mm) \mid \phi(S_U)=S_U\}$ acts on~$S_U$ transitively. According to~\cite[Corollary 4.10]{FK} or~\cite[Theorem~2.2]{Isa}, the property of $S_U$ being affinely homogeneous either holds or fails simultaneously for all complementary hyperplanes~$U \subseteq \mm$, and it is equivalent to the transitivity of the natural action of the group $\Aut(\mm)$ on the set of all such hyperplanes. In this case, $A$ is said to have the \emph{affine homogeneity property} or \emph{property~\textup{(AH)}} for short. Moreover, the latter condition enables one to extend the definition of property~(AH) to the case $\Char \KK > 0$.

Our motivation for studying property~(AH) comes from algebraic geometry. Suppose that $\KK$ is algebraically closed with $\Char \KK = 0$ and let $\GG_a = (\KK,+)$ be the additive group of~$\KK$. An action of the vector group $\GG_a^m$ on an algebraic variety~$X$ is called an \emph{additive action} if it is effective and admits a Zariski open orbit. Additive actions can be regarded as equivariant open embeddings of $\GG_a^m$ into algebraic varieties. The famous Hassett--Tschinkel correspondence establishes a remarkable bijection between additive actions on the projective space $\PP^m$ and local algebras of dimension~$m+1$; see~\cite[Section~2.4]{HT}, \cite[Proposition~5.1]{KL} for the original sources or~\cite[Section~1.6]{AZa} for a survey. We are interested in a generalization of this correspondence that describes induced additive actions on projective hypersurfaces in terms of H-pairs. To explain this correspondence, we need several notions.

An additive action $\GG_a^{m-1} \times X \to X$ on a projective hypersurface $X \subseteq \PP^m$ is called \emph{induced} if it can be extended to an action $\GG_a^{m-1} \times \PP^m \to \PP^m$. Two induced additive actions $\alpha_1\colon \GG_a^{m-1} \times X_1 \to X_1$ and $\alpha_2\colon \GG_a^{m-1} \times X_2 \to X_2$ are called \textit{equivalent} if there exist a group automorphism $\psi\colon \GG_a^{m-1} \to \GG_a^{m-1}$ and a variety automorphism $\phi\colon \PP^m \to \PP^m$ such that $\phi(X_1) = X_2$ and $\phi \circ \alpha_1 = \alpha_2 \circ (\psi\times\phi)$.
An \emph{H-pair} is a pair $(A,U)$, where $A$ is a finite-dimensional local algebra with maximal ideal~$\mm$ and $U \subseteq \mm$ is a hyperplane generating $A$ as an algebra. Two H-pairs $(A_1, U_1)$ and $(A_2, U_2)$ are called \emph{isomorphic} if there exists an algebra isomorphism $\phi\colon A_1 \to A_2$ such that $\phi(U_1)=U_2$.

Given an H-pair $(A,U)$, consider the natural projection $p \colon A \setminus \lbrace 0 \rbrace \to \PP(A)$ and put~$X = \overline{p(\exp U)}$, where the overline denotes the closure. Then $X$ is a hypersurface in~$\PP(A)$ and the natural action of $\exp U$ on~$\PP(A)$ preserves~$X$ and induces an additive action on it. Now the generalized Hassett-Tschinkel correspondence asserts that this construction yields a bijection between equivalence classes of induced additive actions on hypersurfaces in~$\PP^m$ different from hyperplanes and isomorphism classes of H-pairs $(A,U)$ with $\dim A = m + 1$; see~\cite[Proposition~2.15]{HT} and also~\cite[Theorem~1]{AP}, \cite[Theorem 2.6]{AZa} for various formulations. 

It turns out that a considerable role in the study of induced additive actions on projective hypersurfaces is played by Gorenstein algebras.
We call a hypersurface $X \subseteq \PP^m$ \emph{nondegenerate} if there is no linear change of variables after which the equation of~$X$ involves fewer than $m + 1$ variables. For a hypersurface $X \subseteq \PP^m$ admitting an induced additive action and corresponding to an H-pair $(A,U)$, it is proved in~\cite[Theorem~2.30]{AZa} that $X$ is nondegenerate if and only if the algebra $A$ is Gorenstein and $U \subseteq \mm$ is a complementary hyperplane. Observe that in this case the two hypersurfaces $X \subseteq \PP^m$ and $S_U \subseteq \mm$ related to $(A,U)$ are connected to each other by the formula $X = \overline{p(\exp (\exp S_U - 1))}$. 

As for algebraically closed $\KK$ one has $A = \KK \oplus \mm$, there is a natural isomorphism $\Aut(A) \simeq \Aut(\mm)$, so we see that a Gorenstein algebra~$A$ has property~(AH) if and only if all H-pairs of the form $(A,U)$ are isomorphic to each other. 
In this case, the nondegenerate projective hypersurface~$X$ corresponding to an H-pair $(A,U)$ does not actually depend on~$U$ and hence is determined only by the algebra~$A$ itself.
Many examples of this kind can be found in~\cite[Section~3]{Bel}, \cite[Proposition~8]{ABZ}, and~\cite{Bel2}. Besides, it is known from~\cite[Corollary~4.11]{FK} and~\cite[Theorem 2.4]{Isa} that all Gorenstein algebras admitting a $\ZZ_{\ge 0}$-grading with one-dimensional component of degree~$0$ have property~(AH).
On the other hand, \cite[Theorem~2.32]{AZa} asserts that, up to equivalence, every nondegenerate projective hypersurface admits at most one induced additive action, so for every Gorenstein algebra~$A$ failing property~(AH) and any two non-isomorphic H-pairs $(A,U_1)$, $(A,U_2)$ the two corresponding projective hypersurfaces are not isomorphic to each other (as algebraic varieties).

Several examples of Gorenstein algebras failing property~(AH) are given in~\cite[Section~8]{FK}. 
For one of the examples, the Milnor algebra $\MMM(x^3 + x^2y^2 + y^4 + xz^2 + zu^2)$, a proof of failing property~(AH) is provided. The proof relies on computer computations.

In this paper, we exhibit an infinite series~$A_n$, $n\ge2$, of Gorenstein local algebras and provide an elementary proof of the fact that each of them fails property~(AH). Namely, for each $n \ge 2$, consider the 2-generated algebra 
\[A_n = \KK[x,y]/(y^{2n+3}, x^ny^2-y^{n+2}, x^{2n+1}-xy^{n+1}).\] 
It can be checked that for every $n\ge 2$ the algebra $A_n$ is finite-dimensional, local, and Gorenstein, its maximal ideal $\mm_n$ is generated by~$x,y$, and $\Soc A_n = \langle y^{2n+2}\rangle$; see Corollary~\ref{crl_fin-dim} and Proposition~\ref{An_descr_prop}. The construction is inspired by the algebra $\MMM(x^6 + x^2y^3 + y^5)$ mentioned in \cite[Section~8]{FK} for $\KK \in \lbrace \mathbb R, \mathbb C \rbrace$: it can be shown that $\MMM(x^6 + x^2y^3 + y^5) \simeq \KK[x,y]/(y^7, 3x^2y^2+5y^4, 3x^5+xy^3)$, and for $\KK = \mathbb C$ the latter algebra is isomorphic to our algebra $A_2 = \KK[x,y]/(y^7, x^2y^2-y^4, x^5 - xy^3)$ via a suitable rescaling of the variables.

We now present the main results of this paper.

\begin{theorem}
\label{groups_main_theor}
Suppose that $n\ge 2$ and $\Char \KK$ is either zero or coprime to both~$n$ and~$n-1$. Then the one-dimensional subspace $\langle y^{2n+1}\rangle \subseteq \mm_n$ is invariant with respect to the action of the automorphism group $\Aut(A_n)$. More precisely, every element $a \in \Aut(A_n)$ multiplies $y^{2n+1}$ by a scalar $\gamma_a$ satisfying $\gamma_a^{\frac{n-1}3} = 1$ for $n \equiv 1 \, (\mathrm{mod}\: 3)$ and $\gamma_a^{n-1} = 1$ otherwise.
\end{theorem}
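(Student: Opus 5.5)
The plan is to describe $\Aut(A_n)$ explicitly enough to read off its action on $y^{2n+1}$. I will use the basis of $A_n$ and the structure of $\mm_n$ from Corollary~\ref{crl_fin-dim} and Proposition~\ref{An_descr_prop}. Since $\mm_n=(x,y)$, an automorphism $a$ is determined by
\[
a(x)=\alpha x+\beta y+f,\qquad a(y)=\gamma x+\delta y+g,\qquad f,g\in\mm_n^2,
\]
and $a$ must send each defining relation to an element of the ideal. The key observation concerns the $\mm$-adic filtration. The relation $x^ny^2-y^{n+2}$ is homogeneous of degree $n+2$. The relation $x^{2n+1}-xy^{n+1}$ has initial form $xy^{n+1}$, also of degree $n+2$, because $2n+1>n+2$ for $n\ge2$. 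So the associated graded algebra has the two degree-$(n+2)$ relations $x^ny^2=y^{n+2}$ and $xy^{n+1}=0$.

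\emph{Step 1: the linear part of $a$ is diagonal.} The linear part $L=\begin{pmatrix}\alpha&\beta\\ \gamma&\delta\end{pmatrix}$ acts on the associated graded algebra. It must therefore preserve the two-dimensional space $W=\langle x^ny^2-y^{n+2},\,xy^{n+1}\rangle$ of binary forms of degree $n+2$, taken modulo the degree-$(n+2)$ part of the ideal generated by lower-degree relations; I expect there are none, so $W$ itself. I would compare the coefficients of monomials $x^iy^{n+2-i}$ in $L(x^ny^2-y^{n+2})$ and $L(xy^{n+1})$, requiring them to lie in $W$.
\begin{itemize}
\item For $L(xy^{n+1})$, the coefficient of $x^{n+2}$ is $\gamma^{n+1}\alpha$ and that of $y^{n+2}$ is $\beta\delta^{n+1}$, while all other monomials except $x^ny^2$ and $xy^{n+1}$ must vanish. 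This should force $\beta=\gamma=0$.
\item The conclusion uses that binomial coefficients such as $n$ and $n+1$, or $n$ and $n-1$, are nonzero in $\KK$. This is where the hypothesis on $\Char\KK$ enters.
\end{itemize}
With $L$ diagonal, the relation $x^ny^2-y^{n+2}$ gives $\alpha^n\delta^2=\delta^{n+2}$, hence $\alpha^n=\delta^n$.

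\emph{Step 2: the non-homogeneous relation.} Apply $a$ to $x^{2n+1}-xy^{n+1}$ and look at the first filtration degree where $x^{2n+1}$ survives in $A_n$. Using the normal form of monomials from Proposition~\ref{An_descr_prop}, the relation should yield $\alpha^{2n+1}=\alpha\delta^{n+1}$, i.e.\ $\alpha^{2n}=\delta^{n+1}$. The obstacle here is that the higher terms $f,g$ contribute in this degree via products such as $x^{n}y\cdot g$. I would control them by first determining the low-degree parts of $f,g$ from the relations, possibly after normalizing $a$ by composing with a simple explicit automorphism.

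\emph{Step 3: the image of $y^{2n+1}$.} Write $a(y^{2n+1})=(\delta y+g)^{2n+1}=\delta^{2n+1}y^{2n+1}+(2n+1)\delta^{2n}y^{2n}g+\dots$. Since $\mm_n^{2n+2}$ should equal $\Soc A_n=\langle y^{2n+2}\rangle$, it remains to show that the correction $y^{2n}g$ vanishes. This is not automatic: for instance $y^{2n}x^2\neq 0$ when $n=2$. So one needs the precise degree-$2$ part of $g$, extracted from the relations in Step~2. I expect this to be the main obstacle, namely tracking the second-order parts of $a(x)$ and $a(y)$ through the relations. My fallback is to show that $y^{2n+1}$ spans the intersection of $\mm^{2n+1}$ with an annihilator-type subspace that is characteristically defined and one-dimensional, for example $\{z\in\mm^{2n+1} : xz=0\}$. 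Such a subspace is automatically $\Aut$-invariant, which would bypass the correction term.

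\emph{Conclusion.} We then get $\gamma_a=\delta^{2n+1}$. From $\alpha^n=\delta^n$ and $\alpha^{2n}=\delta^{n+1}$ we obtain $\delta^{2n}=\delta^{n+1}$, so $\delta^{n-1}=1$. Consequently $\gamma_a=\delta^{3}\cdot\delta^{2(n-1)}=\delta^3$. Hence $\gamma_a^{(n-1)/\gcd(3,n-1)}=1$, which is exactly the stated dichotomy according to whether $n\equiv1\pmod 3$.
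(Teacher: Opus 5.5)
Your Steps~1 and~2 reach the right conclusions ($\beta=\gamma=0$, $\alpha^n=\delta^n$, $\alpha^{2n}=\delta^{n+1}$), and in fact they need no assumption on $\Char\KK$. In Step~2, the parts $f,g$ contribute nothing once $\beta=\gamma=0$, purely for degree reasons. The gap is Step~3. You correctly identify it as the main obstacle, but it is the entire content of the theorem and you leave it open. (Incidentally, $\mm_n^{2n+2}\neq\Soc A_n$: the socle is $\mm_n^{3n+1}$, and even $y^{2n+1}=x^{3n}$ lies in $\mm_n^{3n}\subseteq\mm_n^{2n+2}$.)

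Write $f=\sum c_{ij}x^iy^j$ and $g=\sum d_{ij}x^iy^j$. A direct check gives $a(y^{2n+1})=\delta^{2n+1}y^{2n+1}+(2n+1)\delta^{2n}(d_{02}+\epsilon d_{20})\,y^{2n+2}$, with $\epsilon=1$ for $n=2$ and $\epsilon=0$ otherwise. So you must prove $d_{02}=0$ (and $d_{20}=0$), and this is exactly where the hypothesis on $\Char\KK$ is used, not in Step~1. The missing computations sit at the next filtration level:
\begin{enumerate}
\item The coefficient of $x^{n+2}y$ in $a(x^ny^2-y^{n+2})$ gives $2\alpha^n\delta d_{20}=0$; note $\Char\KK\neq2$ under the hypothesis.
\item The coefficient of the class $y^{n+3}=x^ny^3$ in the same element, after using $\delta^n=\alpha^n$ and $d_{20}=0$ and dividing by $n$, gives $\alpha d_{02}=\delta c_{11}$.
\item The coefficient of the class $xy^{n+2}=x^{n+1}y^2=x^{2n+1}y$ in $a(x^{2n+1}-xy^{n+1})$ gives $2n\,\delta c_{11}=(n+1)\alpha d_{02}$.
\end{enumerate}
This $2\times 2$ system has determinant $\pm(n-1)$, whence $c_{11}=d_{02}=0$. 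The paper does these computations. Then, instead of expanding $a(y)^{2n+1}$, it uses $y^{2n+1}=x^{3n}$ in $A_n$ and expands $a(x)^{3n}$, which only requires $c_{11}=0$.

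Your fallback cannot replace this. The subspace $\{z\in\mm_n^{2n+1} : xz=0\}$ equals $\langle y^{2n+1},y^{2n+2}\rangle$, which is two-dimensional because $x$ kills the socle. It is also not automatically $\Aut(A_n)$-invariant, since $x$ is not. More fundamentally, any subspace defined by annihilator conditions contains $\Soc A_n$, so it cannot separate $\langle y^{2n+1}\rangle$ from $\langle y^{2n+1}+c\,y^{2n+2}\rangle$. Such an argument is also blind to the characteristic, whereas the statement genuinely depends on it.

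For example, suppose $\Char\KK=p$ divides $n-1$ and $p\neq3$, and put $u=1+ty$ with $t\neq0$. The substitution $x\mapsto ux$, $y\mapsto uy$ multiplies $x^ny^2-y^{n+2}$ by $u^{n+2}$. It sends $x^{2n+1}-xy^{n+1}$ to $u^{2n+1}(x^{2n+1}-xy^{n+1})+u^{n+2}(u^{n-1}-1)xy^{n+1}$. Here $u^{n-1}-1\in y^2\KK[y]$ because $n-1=0$ in $\KK$, so the last term lies in $(xy^{n+3})\subseteq I_n$. This gives an automorphism of $A_n$ (its linear part is the identity) sending $y^{2n+1}$ to $y^{2n+1}+(2n+1)t\,y^{2n+2}=y^{2n+1}+3t\,y^{2n+2}$.
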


The next two corollaries are immediate.

\begin{corollary} \label{U1U2_cor}
Under the assumptions of Theorem~\textup{\ref{groups_main_theor}}, let $U_1, U_2 \subseteq \mm_n$ be two complementary hyperplanes such that $y^{2n+1} \in U_1$ and $y^{2n+1} \notin U_2$. Then there exists no $\phi\in \Aut(A_n)$ such that $\phi(U_1)=U_2$.
\end{corollary}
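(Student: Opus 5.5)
This follows at once from Theorem~\ref{groups_main_theor}; the argument is by contradiction. Suppose some $\phi\in\Aut(A_n)$ satisfies $\phi(U_1)=U_2$.

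By Theorem~\ref{groups_main_theor}, every automorphism multiplies $y^{2n+1}$ by a scalar. So $\phi(y^{2n+1})=\gamma_\phi y^{2n+1}$ for some $\gamma_\phi\in\KK$. This scalar is nonzero, because $\phi$ is bijective and $y^{2n+1}\neq 0$ in $A_n$. The nonvanishing of $y^{2n+1}$ comes from the explicit description of $A_n$ in Proposition~\ref{An_descr_prop}: we have $\Soc A_n=\langle y^{2n+2}\rangle\neq 0$, and $y^{2n+2}=y\cdot y^{2n+1}$. The nonvanishing also follows from the root-of-unity condition on $\gamma_\phi$ in the theorem.

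Since $y^{2n+1}\in U_1$, we get $\gamma_\phi y^{2n+1}=\phi(y^{2n+1})\in\phi(U_1)=U_2$. Because $U_2$ is a linear subspace and $\gamma_\phi\ne0$, dividing by $\gamma_\phi$ gives $y^{2n+1}\in U_2$. This contradicts the hypothesis $y^{2n+1}\notin U_2$, so no such $\phi$ exists.

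No step here is a real obstacle; all the substance is in Theorem~\ref{groups_main_theor}. The only point needing care is that $\gamma_\phi\ne 0$, which holds because $\phi$ is injective.
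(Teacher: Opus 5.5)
Your argument is correct and is exactly the immediate deduction the paper has in mind: by Theorem~\ref{groups_main_theor}, $\phi(y^{2n+1})=\gamma_\phi y^{2n+1}$ with $\gamma_\phi\neq 0$, so $y^{2n+1}\in\phi(U_1)$, which contradicts $y^{2n+1}\notin U_2$. A small simplification: you do not need the socle to see that $y^{2n+1}\neq 0$, since the hypothesis $y^{2n+1}\notin U_2$ already forces it (because $0\in U_2$).
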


\begin{corollary}
\label{notAH_cor}
Under the assumptions of Theorem~\textup{\ref{groups_main_theor}}, the algebra $A_n$ fails property~\textup{(AH)}.
\end{corollary}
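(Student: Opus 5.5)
The plan is to combine the characterization of property~(AH) recalled in the introduction with Corollary~\ref{U1U2_cor}. Following \cite[Corollary~4.10]{FK} and \cite[Theorem~2.2]{Isa}, and the way the definition is extended to arbitrary characteristic, property~(AH) for $A_n$ means that $\Aut(\mm_n)$ acts transitively on the set of complementary hyperplanes in~$\mm_n$. So it is enough to produce two complementary hyperplanes that no element of $\Aut(\mm_n)$ maps onto each other.

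The first step is to identify $\Aut(\mm_n)$ with $\Aut(A_n)$ without assuming $\KK$ algebraically closed. Since $A_n=\KK[x,y]/I$ and $\mm_n$ is generated by $x,y$, the quotient $A_n/\mm_n$ is $\KK$, so $A_n=\KK\cdot 1\oplus\mm_n$. Every automorphism of $A_n$ preserves the unique maximal ideal~$\mm_n$. Conversely, every automorphism $\psi$ of the (non-unital) algebra~$\mm_n$ extends to $A_n$ by $c+m\mapsto c+\psi(m)$, and this extension is multiplicative. Hence restriction $\Aut(A_n)\to\Aut(\mm_n)$ is an isomorphism, and in particular both groups have the same orbits on hyperplanes in~$\mm_n$.

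The second step is to construct the two hyperplanes. By Proposition~\ref{An_descr_prop}, $\Soc A_n=\langle y^{2n+2}\rangle$, and $y^{2n+1}\neq 0$ is not in the socle because $y\cdot y^{2n+1}\neq 0$. Hence $y^{2n+1}$ and $y^{2n+2}$ are linearly independent in~$\mm_n$. Complementary hyperplanes are exactly the kernels of linear functionals $f$ on $\mm_n$ with $f(y^{2n+2})\neq0$. Since the two vectors are independent, we may choose $f_1$ with $f_1(y^{2n+2})=1$ and $f_1(y^{2n+1})=0$, and $f_2$ with $f_2(y^{2n+2})=f_2(y^{2n+1})=1$. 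Then $U_1=\ker f_1$ contains $y^{2n+1}$, while $U_2=\ker f_2$ does not.

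By Corollary~\ref{U1U2_cor}, no $\phi\in\Aut(A_n)$, and hence by the first step no element of $\Aut(\mm_n)$, sends $U_1$ to~$U_2$. So the action is not transitive and $A_n$ fails property~(AH). There is no real obstacle here, since the substance is in Theorem~\ref{groups_main_theor}. The only point needing care is the identification $\Aut(A_n)\simeq\Aut(\mm_n)$ over a field that need not be algebraically closed, which holds because the residue field of $A_n$ is~$\KK$.
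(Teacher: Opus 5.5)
Your proposal is correct and follows the paper's route: the paper calls this corollary immediate from Theorem~\ref{groups_main_theor} via Corollary~\ref{U1U2_cor}, using exactly your two ingredients. These are the existence of complementary hyperplanes with $y^{2n+1}\in U_1$ and $y^{2n+1}\notin U_2$, and the identification $\Aut(A_n)\simeq\Aut(\mm_n)$. Your explicit check of that identification, valid because the residue field of $A_n$ is $\KK$ by Proposition~\ref{An_descr_prop}, fills in the detail the paper leaves implicit.
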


In view of the above discussion of additive actions on projective hypersurfaces we also obtain

\begin{corollary}
\label{addact_cor}
Suppose that $n\ge 2$, $\KK$ is algebraically closed with $\Char\KK = 0$, and $U_1, U_2$ are as in Corollary~\textup{\ref{U1U2_cor}}. Then the H-pairs $(A_n, U_1)$ and $(A_n, U_2)$ correspond to additive actions on non-isomorphic projective hypersurfaces $X_1, X_2$. 
\end{corollary}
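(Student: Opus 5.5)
We deduce this from Corollary~\ref{U1U2_cor} together with the generalized Hassett--Tschinkel correspondence and the results of~\cite{AZa} recalled in the introduction. Since $\Char\KK = 0$, the hypotheses of Theorem~\ref{groups_main_theor} hold, so Corollary~\ref{U1U2_cor} applies to $U_1,U_2$.

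\emph{Step 1: $(A_n,U_i)$ are H-pairs.} Each $U_i$ is a complementary hyperplane in $\mm_n$, so we only have to check that it generates $A_n$. By Proposition~\ref{An_descr_prop}, $\Soc A_n=\langle y^{2n+2}\rangle$, and $y^{2n+2}\in\mm_n^2$. Hence
\[
U_i+\mm_n^2 \supseteq U_i+\Soc A_n=\mm_n .
\]
By the graded Nakayama argument for the nilpotent ideal $\mm_n$, $U_i$ generates $\mm_n$ as a non-unital algebra, and therefore generates $A_n$. So $(A_n,U_1)$ and $(A_n,U_2)$ are H-pairs of dimension $m+1=\dim A_n$. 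Let $X_1,X_2\subseteq\PP(A_n)$ be the corresponding hypersurfaces with their induced additive actions.

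\emph{Step 2: the two actions are not equivalent.} By Corollary~\ref{U1U2_cor}, no automorphism of $A_n$ sends $U_1$ to $U_2$, so the H-pairs $(A_n,U_1)$ and $(A_n,U_2)$ are not isomorphic. The generalized Hassett--Tschinkel correspondence (\cite[Proposition~2.15]{HT}, \cite[Theorem~2.6]{AZa}) is a bijection between isomorphism classes of H-pairs and equivalence classes of induced additive actions. Hence the induced additive actions on $X_1$ and $X_2$ are not equivalent.

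\emph{Step 3: pass from actions to varieties.} Since $A_n$ is Gorenstein and each $U_i$ is complementary, \cite[Theorem~2.30]{AZa} shows that $X_1$ and $X_2$ are nondegenerate. Suppose, for contradiction, that $X_1\simeq X_2$. Transporting the action on $X_2$ to $X_1$ gives a second additive action on the nondegenerate hypersurface $X_1$. By \cite[Theorem~2.32]{AZa}, a nondegenerate hypersurface carries at most one induced additive action up to equivalence, as discussed in the introduction. This contradicts Step~2, so $X_1\not\simeq X_2$.

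The only delicate point is Step~3. There one must ensure that an abstract isomorphism $X_1\simeq X_2$ transports the induced action to an induced action, that is, that it comes from a projective linear map of $\PP^m$. I expect this to be built into the formulation of \cite[Theorem~2.32]{AZa}, as the introduction indicates, and I would simply invoke it. If it were not, I would argue as follows. The variety $X_i$ is a hypersurface of dimension $m-1=\dim A_n-2\ge 3$, so the Grothendieck--Lefschetz theorem shows that its Picard group is generated by $\mathcal O(1)$. Any isomorphism therefore preserves $\mathcal O(1)$. By nondegeneracy, $X_i$ is not contained in a hyperplane, and it is projectively normal, so the isomorphism extends to a linear automorphism of $\PP^m$.
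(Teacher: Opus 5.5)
Your argument is correct and follows the paper's route: Corollary~\ref{U1U2_cor} makes the H-pairs non-isomorphic, the generalized Hassett--Tschinkel bijection makes the induced actions non-equivalent, and \cite[Theorems~2.30 and~2.32]{AZa} (nondegeneracy plus uniqueness of induced additive actions) turn this into non-isomorphism of $X_1,X_2$. Your Step~1 and your Step~3 fallback just make explicit two points the paper takes for granted, with one small correction: the fallback needs only linear normality, i.e.\ surjectivity of $H^0(\PP^m,\mathcal O(1))\to H^0(X_i,\mathcal O_{X_i}(1))$, which is automatic for a hypersurface of degree $\ge 2$, and not projective normality, which would force $X_i$ to be normal, something you have not established and which fails for many hypersurfaces of this type.
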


We remark that, according to the construction in the generalized Hassett--Tschinkel correspondence, the additive actions on the non-isomorphic hypersurfaces $X_1, X_2 \subseteq \PP(A_n)$ from Corollary~\ref{addact_cor} arise from the same additive action on $\PP(A_n)$ induced by the natural action of $\exp \mm_n$ on~$A_n$.
For each $X_i$, the additive action on it is obtained by suitably reducing the acting group and then restricting the action to the subvariety.

As is well-known, if $\KK$ is algebraically closed and $\Char \KK = 0$, then the Lie algebra of the group $\Aut(A_n)$ is given by the derivations of~$A_n$. In this case, it follows from Theorem~\ref{groups_main_theor} that the element $y^{2n+1} \in \mm_n$ is annihilated by all derivations of~$A_n$. In the next theorem, we obtain this result directly under the same restrictions on~$\KK$ as in Theorem~\ref{groups_main_theor}.

\begin{theorem}
\label{algebras_main_theor}
Suppose that $n\ge 2$ and $\Char \KK$ is either zero or coprime to both~$n$ and~$n-1$. Then the element $y^{2n+1} \in \mm_n$ is annihilated by all derivations of~$A_n$. 
\end{theorem}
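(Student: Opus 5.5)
The plan is to work entirely with the values of a derivation on the generators. A derivation $D$ of $A_n$ is determined by $f = D(x)$ and $g = D(y)$. Since $D(y^{2n+1}) = (2n+1)y^{2n}g$, it suffices to show that $y^{2n}g = 0$ in $A_n$. For this I would use the explicit monomial basis of $A_n$ and the multiplication rules from Corollary~\ref{crl_fin-dim} and Proposition~\ref{An_descr_prop}. I expect these to give $y^{2n}\mm_n^2 = 0$ and $xy^{2n} = 0$, or at least $xy^{2n} \in \langle y^{2n+2}\rangle$; this needs to be checked from the relations, for instance by using $x^ny^2 = y^{n+2}$ to rewrite products. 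Granting that, $y^{2n}g$ only sees the constant term $g_0$ of $g$ and its $y$-coefficient $c$. So the goal reduces to showing $g_0 = 0$ and $c = 0$, together with whatever contribution the $x$-coefficient makes if $xy^{2n} \neq 0$.

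First, I would show that $D(\mm_n) \subseteq \mm_n$, i.e.\ that $f$ and $g$ have zero constant terms. Applying $D$ to the relations $x^ny^2 - y^{n+2}$ and $x^{2n+1} - xy^{n+1}$ gives two identities in $A_n$. Reading off the lowest-order terms in the basis should force the constant terms to vanish. For example, $D(x^ny^2 - y^{n+2})$ contains $2g_0x^ny + nf_0x^{n-1}y^2$ plus terms of higher order. The monomials $x^ny$ and $x^{n-1}y^2$ should be independent basis elements, and the factor $n$ is invertible by hypothesis, so both $g_0$ and $f_0$ vanish. I would deliberately avoid using the relation $y^{2n+3} = 0$ here, since $2n+3$ may be divisible by $\Char\KK$.

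Next comes the linear parts. Write $f \equiv \alpha x + \beta y$ and $g \equiv \gamma x + c y$ modulo $\mm_n^2$, and apply $D$ to the two relations again, comparing coefficients of the lowest-degree surviving basis monomials. The diagonal terms behave like an Euler-type derivation $\alpha x\partial_x + c\,y\partial_y$. Preserving $x^ny^2 - y^{n+2}$ forces $n\alpha + 2c = (n+2)c$, i.e.\ $n(\alpha - c) = 0$. Preserving $x^{2n+1} - xy^{n+1}$ forces $(2n+1)\alpha = \alpha + (n+1)c$. Together these give $\alpha = c$ and $(n-1)c = 0$. This is exactly where the hypotheses that $n$ and $n-1$ are invertible in $\KK$ enter, so $c = 0$. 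The off-diagonal coefficients $\beta, \gamma$ and the higher-order terms of $f, g$ must be shown not to pollute these coefficient comparisons. I would handle them with a filtration argument: order basis monomials by degree, together with a secondary weight distinguishing powers of $x$ from powers of $y$, and check that the monomials $x^{n}y^2$ and $x^{2n+1}$ (or their normal forms) receive contributions only from the diagonal linear terms. If $xy^{2n} \neq 0$, the same bookkeeping should also kill $\gamma$.

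The main obstacle is this last bookkeeping step. Because $A_n$ is not quasi-homogeneous, the normal forms of monomials like $x^{2n+1}$ and $y^{n+2}$ get rewritten by the relations into other monomials. Cross terms from $\beta, \gamma$ and from quadratic parts of $f, g$ could therefore land on the same basis elements used for comparison. I would first try to pick comparison monomials of the lowest possible total degree, such as $x^{n}y^{2}$ versus $x^{n-1}y^{3}$, and verify directly from the basis description that they are untouched by the cross terms. Only if that fails would I resort to a full coefficient comparison in a second, lower degree.
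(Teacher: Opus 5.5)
\textbf{There is a genuine gap in your reduction.} You expect $y^{2n}\mm_n^2=0$, but $y^{2n}\cdot y^2=y^{2n+2}$ spans $\Soc A_n$ and is nonzero. Write $D(x)=\sum a_{ij}x^iy^j$ and $D(y)=\sum b_{ij}x^iy^j$.

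For $n\ge 3$ you get $y^{2n}D(y)=b_{00}y^{2n}+b_{01}y^{2n+1}+b_{02}y^{2n+2}$; all other products vanish because $xy^{n+3}=y^{2n+3}=0$. For $n=2$ you even get $y^4D(y)=b_{00}y^4+b_{01}y^5+b_{10}xy^4+(b_{02}+b_{20})y^6$, since $xy^4\neq 0$ and $x^2y^4=y^6$.

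So killing the constant term and the linear coefficient $c=b_{01}$ does not finish the proof. You must also show that the \emph{quadratic} coefficient $b_{02}$ vanishes. Nothing in your plan produces this. It also cannot come out of your first-order comparisons: the quadratic coefficients of $D(x)$ and $D(y)$ do not occur in the coefficients you compare at all, since they land on monomials one degree higher.

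\textbf{The missing ingredient is a second round of the same argument, one degree up.} For $n\ge 3$:
\begin{itemize}
\item the coefficient of $D(f_2)$ on the class $\{y^{n+3},x^ny^3\}$ is $n(a_{11}-b_{02})$;
\item the coefficient of $D(f_3)$ on the class $\{xy^{n+2},x^{n+1}y^2,x^{2n+1}y\}$ is $2na_{11}-(n+1)b_{02}$.
\end{itemize}
The determinant is again $1-n$, so $a_{11}=b_{02}=0$.

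For $n=2$ there are extra terms. The monomials $x^2y^3$ and $x^3y^2$ also receive $-4b_{20}$ and $-3b_{20}$ from $y^3D(y)$ and $xy^2D(y)$. So you first need $b_{20}=0$, which follows from the coefficient $2b_{20}$ of $x^4y$ in $D(f_2)$. Likewise $b_{10}=0$ follows from the coefficient $2b_{10}$ of $x^3y$. The paper's Steps 6--7 tacitly need this same $b_{20}$ correction when $n=2$.

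This second-order system is exactly what the paper solves. The paper then avoids $D(y)$ by using $y^{2n+1}=x^{3n}$ and $D(x^{3n})=3nx^{3n-1}D(x)$. That requires $a_{00}=a_{01}=a_{10}=a_{11}=0$, so second-order information is unavoidable on that route too, since $x^{3n-1}\cdot xy=x^{3n}y=y^{2n+2}\neq 0$.

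\textbf{What does work.} The bookkeeping you flag as the main obstacle is harmless. With the classes $E(g)$ of Corollary~\ref{crl_E(g)}, the coefficient of $D(f_2)$ on $\{y^{n+2},x^ny^2\}$ is exactly $n(a_{10}-b_{01})$. The coefficient of $D(f_3)$ on $\{xy^{n+1},x^{2n+1}\}$ is exactly $2na_{10}-(n+1)b_{01}$. Neither receives any contamination from off-diagonal or higher-order terms. Your constant-term argument via $x^ny$ and $x^{n-1}y^2$ is also correct; note that it uses that $2$ is invertible, which the hypothesis guarantees.
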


We remark that the proof of Theorem~\ref{algebras_main_theor} is simpler than that of Theorem~\ref{groups_main_theor}, and, in the case of algebraically closed $\KK$ with $\Char \KK = 0$, Corollary~\ref{notAH_cor} can be deduced from Theorem~\ref{algebras_main_theor}. Indeed, in this case Theorem~\ref{algebras_main_theor} implies that the element $y^{2n+1} \in \mm_n$ is fixed by the the connected component of the identity of the group~$\Aut(A_n)$, hence the orbit of $y^{2n+1}$ under the action of~$\Aut(A_n)$ is finite. It follows that $\Aut(A_n)$ cannot act transitively on the set of complementary hyperplanes in~$\mm_n$, so $A_n$ fails property~(AH).

This paper is organized as follows. In Section~\ref{sec_alg_desc}, we study the structure and establish main properties of the algebras~$A_n$. Sections~\ref{proof_groups_sec} and~\ref{proof_algebras_sec} are devoted to proofs of Theorems~\ref{groups_main_theor} and~\ref{algebras_main_theor}, respectively. We note that both proofs are elementary and do not involve computer computations. Finally, in Section~\ref{sec_example} we illustrate Corollary~\ref{addact_cor} for the algebra~$A_2$ by taking two particular complementary hyperplanes $U_1,U_2$ and writing down explicitly the equations defining the corresponding non-isomorphic projective hypersurfaces~$X_1,X_2$.

\textbf{Acknowledgement.} The authors are grateful to Ivan Arzhantsev for suggesting the problem and useful discussions. 

\section{The algebras~\texorpdfstring{$A_n$}{A\_n}}
\label{sec_alg_desc}

For each $n \ge 2$, consider the ideal $I_n = (f_1,f_2,f_3) \triangleleft \KK[x,y]$ generated by the polynomials
\begin{equation} \label{eqn_f1f2f3}
f_1 = y^{2n+3}, \ f_2 = x^ny^2-y^{n+2}, \ f_3 = x^{2n+1}-xy^{n+1}
\end{equation}
and let $A_n = \KK[x,y]/I_n$ be the corresponding quotient algebra. Consider also the polynomial $f_4 = xy^{n+3}$.

\begin{proposition}
The following assertions hold.
\begin{enumerate}[label=\textup{(\alph*)},ref=\textup{\alph*}]
\item \label{f4inIn}
$f_4 \in I_n$.
\item \label{f1f2f3f4GB}
The set $\lbrace f_1, f_2, f_3, f_4 \rbrace$ is a Gr\"obner basis of $I_n$ with respect to the lexicographic order $x \succ y$.
\end{enumerate}
\end{proposition}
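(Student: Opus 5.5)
The plan is to prove (a) by exhibiting an explicit combination of $f_1,f_2,f_3$, and then to prove (b) by Buchberger's criterion, checking that every S-polynomial reduces to zero. Throughout we use the lexicographic order with $x \succ y$. The leading monomials are then
\[
\mathrm{LM}(f_1)=y^{2n+3},\quad \mathrm{LM}(f_2)=x^ny^2,\quad \mathrm{LM}(f_3)=x^{2n+1},\quad \mathrm{LM}(f_4)=xy^{n+3}.
\]

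For (a), we reduce $y^2f_3$ using $f_2$ twice. A direct computation gives
\[
y^2f_3 - x^{n+1}f_2 - xy^nf_2 = xy^{2n+2}-xy^{n+3} = -xy^{n+3}\bigl(1-y^{n-1}\bigr).
\]
So $g := xy^{n+3}(1-y^{n-1}) \in I_n$. Since $n\ge2$, the factor $1-y^{n-1}$ is invertible modulo high powers of $y$. Concretely, multiply $g$ by $1+y^{n-1}+\dots+y^{k(n-1)}$ to obtain $xy^{n+3}-xy^{n+3+(k+1)(n-1)}\in I_n$. For $k$ large enough, the second term is divisible by $y^{2n+3}=f_1$, and hence $f_4=xy^{n+3}\in I_n$. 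This step needs no assumption on $\Char\KK$.

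For (b), part (a) shows that $\{f_1,f_2,f_3,f_4\}$ generates $I_n$, so it suffices to show that all S-polynomials reduce to $0$. The pairs with coprime leading monomials need no check; the only such pair is $(f_1,f_3)$. The pair $(f_1,f_4)$ consists of monomials, so its S-polynomial is $0$. The remaining four pairs give the following.
\begin{itemize}
\item $S(f_1,f_2)=x^nf_1-y^{2n+1}f_2=y^{3n+3}$, which is a multiple of $f_1$.
\item $S(f_2,f_4)=y^{n+1}f_2-x^{n-1}f_4=-y^{2n+3}=-f_1$.
\item $S(f_3,f_4)=y^{n+3}f_3-x^{2n}f_4=-xy^{2n+4}$, which is a multiple of $f_4$.
\item $S(f_2,f_3)=x^{n+1}f_2-y^2f_3=-x^{n+1}y^{n+2}+xy^{n+3}$. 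Reducing by $xy^nf_2$ gives $-xy^{2n+2}+xy^{n+3}$, and both of these terms are divisible by $\mathrm{LM}(f_4)$, so the remainder is $0$.
\end{itemize}

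The main obstacle is (a). The natural reduction does not produce $f_4$ directly but only $f_4$ times the unit-like factor $1-y^{n-1}$, and the key point is to remove this factor using nilpotency of $y$ modulo $f_1$. Once $f_4$ is available, the Buchberger check in (b) is routine bookkeeping of leading terms.
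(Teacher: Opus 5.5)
Your proposal is correct and follows the same route as the paper. Taking $k=1$ in your argument for (a) gives exactly the paper's identity $f_4 = xy^{n-2}f_1 + (x^{n+1}y^{n-1}+xy^{2n-1}+x^{n+1}+xy^n)f_2 - (y^{n+1}+y^2)f_3$; for (b) the paper just cites Buchberger's criterion, and your S-polynomial checks carry that out correctly.
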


\begin{proof}
(\ref{f4inIn}) This is implied by the following relation verified by a direct computation:
\[f_4 = xy^{n+3} = xy^{n-2} \cdot f_1 + (x^{n+1}y^{n-1}+xy^{2n-1}+x^{n+1}+xy^n) \cdot f_2 - (y^{n+1}+y^2) \cdot f_3.\]

(\ref{f1f2f3f4GB}) This is checked directly via Buchberger's criterion; see, for example, \cite[Chapter~2, Section~6, Theorem~6]{CLO}.
\end{proof}

\begin{corollary} \label{crl_fin-dim}
For every $n\ge2$, the algebra $A_n$ is finite-dimensional. More precisely, $\dim A_n = n^2 + 6n+2$ and a basis of $A_n$ is given by the set of monomials
\begin{equation} \label{eqn_basis}
B_n = \lbrace x^iy^j \in \KK[x,y] \mid x^iy^j \ \text{is not divisible by any of} \ y^{2n+3}, xy^{n+3}, x^ny^2, x^{2n+1} \rbrace.
\end{equation}
\end{corollary}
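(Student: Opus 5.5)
Since the Gröbner basis $\{f_1,f_2,f_3,f_4\}$ of $I_n$ with respect to the lexicographic order $x\succ y$ is already established in the preceding proposition, the plan is to invoke the standard fact (e.g.\ \cite[Chapter~5, Section~3]{CLO}): the monomials not lying in the leading-term ideal $\langle \mathrm{LT}(I_n)\rangle$ form a $\KK$-basis of $\KK[x,y]/I_n$. The first step is to identify the leading terms under lex with $x\succ y$. We have $\mathrm{LT}(f_1)=y^{2n+3}$ and $\mathrm{LT}(f_4)=xy^{n+3}$. For $f_2=x^ny^2-y^{n+2}$ the leading term is $x^ny^2$, because it contains $x$ and $y^{n+2}$ does not. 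For $f_3=x^{2n+1}-xy^{n+1}$ the leading term is $x^{2n+1}$, because its $x$-exponent $2n+1$ exceeds $1$ (here $n\ge 2$ is used, though $2n+1>1$ holds anyway). Since the set is a Gröbner basis, $\langle \mathrm{LT}(I_n)\rangle$ is generated by these four monomials, so $B_n$ is a basis. Finite-dimensionality then follows as soon as $B_n$ is finite, which holds because $B_n$ contains no pure power $x^{2n+1}$ and no $y^{2n+3}$.

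The second step is to count $B_n$ by splitting according to the exponent $j$ of $y$ in $x^iy^j$. For $j\in\{0,1\}$ the only constraint is $i\le 2n$, since $x^ny^2$ and $xy^{n+3}$ need $j\ge2$; this gives $2(2n+1)$ monomials. For $2\le j\le n+2$ the constraint is $i\le n-1$, since $x^ny^2$ applies and is stronger than $x^{2n+1}$, while $xy^{n+3}$ does not apply; this gives $(n+1)\cdot n$ monomials. For $n+3\le j\le 2n+2$ we must have $i=0$, because $xy^{n+3}$ applies; this gives $n$ monomials. Monomials with $j\ge 2n+3$ are excluded by $y^{2n+3}$.

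Adding up, $4n+2+n^2+n+n=n^2+6n+2$. The only real care needed is to check the leading terms and the boundary ranges for $j$. There is no genuine obstacle, since the Gröbner basis property has already been proved.
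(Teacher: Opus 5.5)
Your proposal is correct and follows essentially the same route as the paper. Both identify $y^{2n+3}, xy^{n+3}, x^ny^2, x^{2n+1}$ as the leading monomials of the Gr\"obner basis, take the standard monomials $B_n$ as a basis, and count them in the ranges $0\le j\le 1$, $2\le j\le n+2$, $n+3\le j\le 2n+2$ to obtain $(2n+1)\cdot 2+n(n+1)+n=n^2+6n+2$.
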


\begin{proof}
Note that $y^{2n+3}, xy^{n+3}, x^ny^2, x^{2n+1}$ are exactly the leading monomials of $f_1,f_4,f_2,f_3$, respectively. Since a power of~$x$ and a power of~$y$ are among these leading monomials, it follows that $A_n$ is finite-dimensional and the monomials in~$B_n$ form a basis of~$A_n$. Observe that $B_n$ consists of all monomials $x^iy^j$ with $0 \le i \le 2n$, $0 \le j \le 1$, all monomials $x^iy^j$ with $0 \le i \le n-1$, $2 \le j \le n+2$, and all monomials $y^j$ with $n+3 \le j \le 2n+2$, so $\dim A_n = (2n+1)\cdot2+n(n+1)+n = n^2+6n+2$.
\end{proof}

Next, we study relations in~$A_n$ between various monomials.
\begin{proposition} \label{prop_relations}
There are the following relations in~$A_n$:
\begin{align}
y^{n+2} &= x^ny^2; \label{green_eq} \\
y^{n+k+2} &= x^ny^{k+2} \ \text{for} \ 1 \le k \le n-2; \label{blue_eq}\\
y^{2n+1} &= x^ny^{n+1} = x^{3n}; \label{violet_eq}\\
y^{2n+2} &= x^ny^{n+2} = x^{2n}y^2 = x^{3n}y; \label{cyan_eq}\\
x^ky^{n+2} &= x^{n+k}y^2 = x^{2n+k}y \ \text{for} \ 1 \le k \le n-1; \label{pink_eq}\\
xy^{n+1} &= x^{2n+1}; \label{yellow_eq}\\
x^ky^{n+1} &= x^{2n+k} \ \text{for} \ 2 \le k \le n-1; \label{orange_eq}\\
y^{2n+3} &= xy^{n+3} = x^{n+1}y^3 = x^{2n+1}y^2 = x^{3n+1} = 0. \label{null_eq}
\end{align}
\end{proposition}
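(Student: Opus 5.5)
The plan is to derive every relation from the three defining relations $x^ny^2 = y^{n+2}$ and $x^{2n+1} = xy^{n+1}$, together with $y^{2n+3}=0$ and the already established fact $xy^{n+3} = f_4 \in I_n$. We multiply these relations by suitable monomials and chain the resulting equalities. The order is: (\ref{green_eq}) first, then the consequences for powers of $y$, then the mixed relations, and finally the vanishing relations (\ref{null_eq}).

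Relation (\ref{green_eq}) is just $f_2 = 0$. Multiplying it by $y^k$ gives $y^{n+k+2} = x^ny^{k+2}$ for every $k\ge 0$. For $1\le k\le n-2$ this is (\ref{blue_eq}); for $k = n-1$ and $k=n$ it gives the first equalities $y^{2n+1} = x^ny^{n+1}$ and $y^{2n+2}=x^ny^{n+2}$ in (\ref{violet_eq}) and (\ref{cyan_eq}). Relation (\ref{yellow_eq}) is $f_3=0$. Multiplying it by $x^{k-1}$ gives $x^ky^{n+1} = x^{2n+k}$ for all $k\ge1$, which yields (\ref{orange_eq}) and, for $k=n$, the equality $x^ny^{n+1} = x^{3n}$, completing (\ref{violet_eq}). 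For (\ref{cyan_eq}), multiply (\ref{green_eq}) by $x^n$ to get $x^ny^{n+2} = x^{2n}y^2$. Multiply $x^ny^{n+1}=x^{3n}$ by $y$ to get $x^{3n}y = x^ny^{n+2}$. For (\ref{pink_eq}), multiply (\ref{green_eq}) by $x^k$ to get $x^ky^{n+2} = x^{n+k}y^2$. Then multiply $x^ky^{n+1}=x^{2n+k}$ (valid for all $k\ge1$) by $y$ to get $x^ky^{n+2} = x^{2n+k}y$.

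For (\ref{null_eq}), $y^{2n+3}=0$ and $xy^{n+3}=0$ hold by definition and by the Proposition above. Multiplying (\ref{green_eq}) by $xy$ gives $x^{n+1}y^3 = xy^{n+3} = 0$. Multiplying $x^ky^{n+2} = x^{n+k}y^2$ at $k = n+1$ (valid for any $k$) gives $x^{2n+1}y^2 = x^{n+1}y^{n+2}$. This is a multiple of $xy^{n+3}$? Not directly, since $n+2 < n+3$. Instead, use $x^{2n+1}y^2 = xy^{n+1}\cdot y^2 = xy^{n+3}=0$ via $f_3$. Finally, $x^{3n+1} = x\cdot x^{3n} = x\cdot x^ny^{n+1} = x^{n+1}y^{n+1}$. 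This equals $x^{n+1}y^3\cdot y^{n-2}=0$, because $n\ge2$ makes $n+1\ge 3$.

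The only real obstacle is bookkeeping. Each chain has to use multipliers with nonnegative exponents, which is where $n\ge 2$ and the stated ranges of $k$ enter. The vanishing relations also have to reduce to $f_1$ or $f_4$ rather than circularly to one another. I expect (\ref{null_eq}) to need the most care, so I would check each of its equalities explicitly.
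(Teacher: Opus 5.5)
Your proof is correct and follows essentially the same route as the paper: you multiply $f_2$ and $f_3$ by monomials, and you reduce the vanishing relations to $f_1$ and $f_4 = xy^{n+3}$, using $x^{n+1}y^3 = xy^{n+3}$ obtained from $f_2$. The one deviation is for $x^{2n+1}y^2$, where you use $f_3\cdot y^2$ instead of the paper's $x^{2n+1}y^2 = x^{n+1}y^{n+2}$; the route you abandoned actually works too, since $x^{n+1}y^{n+2}$ is a multiple of $x^{n+1}y^3$, which you had just shown to vanish.
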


Before giving a proof of this proposition, we discuss a useful visualization of relations~(\ref{green_eq})--(\ref{null_eq}) shown in Figure~\ref{PictMain} below. Recall from~(\ref{eqn_basis}) the set $B_n$ of monomials forming a basis of~$A_n$ and consider the corresponding set 
\begin{equation} \label{eqn_Lambda_n}
\Lambda_n = \lbrace (i,j) \mid x^iy^j \in B_n \rbrace.
\end{equation}
In Figure~\ref{PictMain}, the set $\Lambda_n$ is given by the Young diagram bounded by the bold line. 

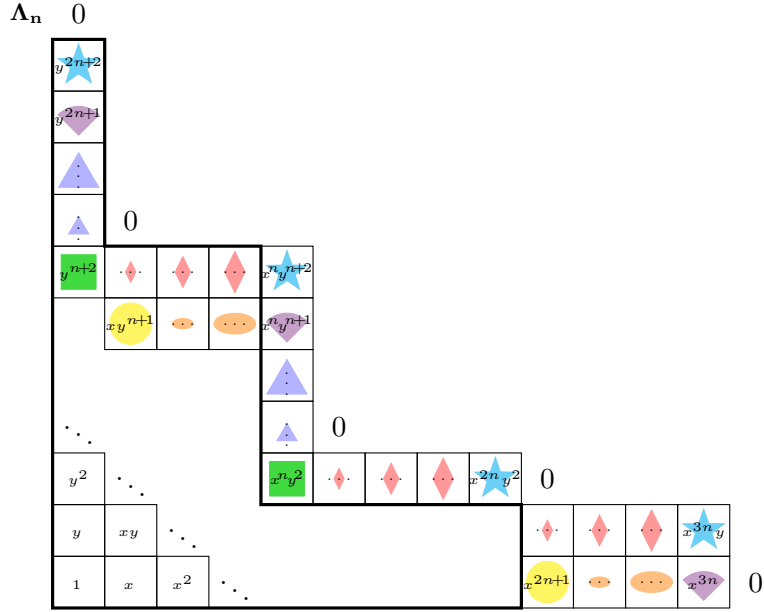
\begin{figure}[ht]
\begin{tikzpicture}[x=0.75pt,y=0.75pt,yscale=-1,xscale=1]
\fontsize{4pt}{5pt}\selectfont
\def\d{13}
\def\myn{4}
\coordinate (e1) at (\d,0);
\coordinate (e2) at (0,-\d);
\foreach \x/\y in {0/0,0/1,1/0,2/0,1/1,0/2} {
    \drawcell{\x}{\y}{$\mymonom{\x}{\y}$}{nofill}{}{0};
}
\foreach \x/\y in {3/0,2/1,1/2,0/3} {
    \draw ($2*\x*(e1)+2*\y*(e2)$) node{\small $\ddots$};
}
\drawcell{0}{\numexpr 2+\myn \relax}{$y^{n\!+\!2}$}{nofill}{mygreen}{1};
\drawcell{\myn}{2}{$x^{\!n}\!y^{\!2}$}{nofill}{mygreen}{1};
\drawcell{1}{\numexpr 1+\myn \relax}{$xy^{n\!+\!1}$}{nofill}{myyellow}{1};
\drawcell{\numexpr 1+2*\myn \relax}{0}{$x^{2n\!+\!1}$}{nofill}{myyellow}{1};
\ifnum \myn>3 \foreach \y in {\numexpr\myn+3\relax,...,\numexpr2*\myn\relax} {
    \pgfmathsetmacro{\coeff}{(\y-3-\myn)/(\myn-3)}
    \drawcell{0}{\y}{$\vdots$}{nofill}{myblue}{\coeff};
}\fi
\ifnum \myn>3 \foreach \y in {3,...,\myn} {
    \pgfmathsetmacro{\coeff}{(\y-3)/(\myn-3)}
    \drawcell{\myn}{\y}{$\vdots$}{nofill}{myblue}{\coeff};
}\fi
\pgfmathsetmacro{\coeff}{(\myn-2)/(\myn-1)}
\drawcell{0}{\numexpr2*\myn+1\relax}{$y^{2n\!+\!1}$}{nofill}{myviolet}{\coeff};
\drawcell{\myn}{\numexpr\myn+1\relax}{$x^{\!n}\!y^{\!n\!+\!1}$}{nofill}{myviolet}{\coeff};
\drawcell{\numexpr3*\myn\relax}{0}{$x^{3n}$}{nofill}{myviolet}{\coeff};
\drawcell{0}{\numexpr2*\myn+2\relax}{$y^{2n\!+\!2}$}{nofill}{mycyan}{1};
\drawcell{\myn}{\numexpr\myn+2\relax}{$x^{\!n}\!y^{\!n\!+\!2}$}{nofill}{mycyan}{1};
\drawcell{\numexpr2*\myn\relax}{2}{$x^{2n}y^2$}{nofill}{mycyan}{1};
\drawcell{\numexpr3*\myn\relax}{1}{$x^{3n}y$}{nofill}{mycyan}{1};
%Pink
\foreach \x in {1,...,\numexpr\myn-1\relax} {
    \ifnum \myn>2 \pgfmathsetmacro{\coeff}{(\x-1)/(\myn-2)} \else \pgfmathsetmacro{\coeff}{1} \fi
    \drawcell{\x}{\numexpr\myn+2\relax}{$\ldots$}{nofill}{mypink}{\coeff};
}
\foreach \x in {\numexpr\myn+1\relax,...,\numexpr2*\myn-1\relax} {
    \ifnum \myn>2 \pgfmathsetmacro{\coeff}{(\x-\myn-1)/(\myn-2)} \else \pgfmathsetmacro{\coeff}{1} \fi
    \drawcell{\x}{2}{$\ldots$}{nofill}{mypink}{\coeff};
}
\foreach \x in {\numexpr2*\myn+1\relax,...,\numexpr3*\myn-1\relax} {
    \ifnum \myn>2 \pgfmathsetmacro{\coeff}{(\x-2*\myn-1)/(\myn-2)} \else \pgfmathsetmacro{\coeff}{1} \fi
    \drawcell{\x}{1}{$\ldots$}{nofill}{mypink}{\coeff};
}
%Orange
\ifnum \myn>2
\foreach \x in {2,...,\numexpr\myn-1\relax} {
    \ifnum \myn>3 \pgfmathsetmacro{\coeff}{(\x-2)/(\myn-3)} \else \pgfmathsetmacro{\coeff}{1} \fi
    \drawcell{\x}{\numexpr\myn+1\relax}{$\ldots$}{nofill}{myorange}{\coeff};
}
\foreach \x in {\numexpr2*\myn+2\relax,...,\numexpr3*\myn-1\relax} {
    \ifnum \myn>3 \pgfmathsetmacro{\coeff}{(\x-2*\myn-2)/(\myn-3)} \else \pgfmathsetmacro{\coeff}{1} \fi
    \drawcell{\x}{0}{$\ldots$}{nofill}{myorange}{\coeff};
}\fi
%Zeros
\draw ($\numexpr(4*\myn+6)\relax*(e2)$) node{\small $0$};
\draw ($2*(e1)+\numexpr(2*\myn+6)\relax*(e2)$) node{\small $0$};
\draw ($\numexpr(2*\myn+2)\relax*(e1)+6*(e2)$) node{\small $0$};
\draw ($\numexpr(6*\myn+2)\relax*(e1)$) node{\small $0$};
\draw ($\numexpr(4*\myn+2)\relax*(e1)+4*(e2)$) node{\small $0$};
\draw[very thick] (-\d,\d) -- ++($\numexpr(2*2*\myn+2)\relax*(e1)$) -- ++($2*2*(e2)$) -- ++($\numexpr(-2*\myn-2)\relax*(e1)$) -- ++($\numexpr(2*\myn+2)\relax*(e2)$) -- ++($\numexpr(-2*\myn+2)\relax*(e1)$) -- ++($\numexpr(2*\myn)\relax*(e2)$) -- ++($-2*(e1)$) -- cycle;
\draw ($-2*(e1)+\numexpr(4*\myn+6)\relax*(e2)$) node {\scriptsize $\bf \Lambda_n$};
\end{tikzpicture}
\caption{The algebra $A_n$}
\label{PictMain}
\end{figure}

Let us give names for monomials appearing in~(\ref{green_eq})--(\ref{orange_eq}) as follows:
\begin{itemize}
\item \emph{\mygreenform-monomials} are $y^{n+2}, x^ny^2$ appearing in~(\ref{green_eq});
\item \emph{\myblueform-monomials} are $y^{n+k+2}, x^ny^{k+2}$ for $1 \le k \le n-2$ appearing in~(\ref{blue_eq});
\item \emph{\myvioletform-monomials} are $y^{2n+1}, x^ny^{n+1}, x^{3n}$ appearing in~(\ref{violet_eq});
\item \emph{\mycyanform-monomials} are $y^{2n+2}, x^ny^{n+2}, x^{2n}y^2, x^{3n}y$ appearing in~(\ref{cyan_eq});
\item \emph{\mypinkform-monomials} are $x^ky^{n+2}, x^{n+k}y^2, x^{2n+k}y$ for $1 \le k \le n-1$ appearing in~(\ref{pink_eq});
\item \emph{\myyellowform-monomials} are $xy^{n+1}, x^{2n+1}$ appearing in~(\ref{yellow_eq}); 
\item \emph{\myorangeform-monomials} are $x^ky^{n+1}, x^{2n+k}$ for $2 \le k \le n-1$ appearing in~(\ref{orange_eq}).
\end{itemize}
We remark that for $n=2$ there are no \myblueform-monomials and \myorangeform-monomials.
Now Figure~\ref{PictMain} visualizes relations~(\ref{green_eq})--(\ref{null_eq}) as follows:
\begin{itemize}
\item each monomial $x^iy^j$ corresponds to the cell with coordinates~$(i,j)$;
\item monomials in equally marked cells correspond to equal elements in~$A_n$ (relations~(\ref{green_eq})--(\ref{orange_eq}));
\item all monomials in the white zone outside the depicted figure are equal to~$0$ in~$A_n$ (relations~(\ref{null_eq})). 
\end{itemize}

\begin{proof}[Proof of Proposition~\textup{\ref{prop_relations}}]
We divide the proof into two steps. 

{\bf Step 1}. Note that the relation $f_2 \in I_n$ is equivalent to the equality $y^{n+2} = x^ny^2$ of \mygreenform-monomials in~$A_n$. Next, we multiply this equality by various monomials to obtain more relations in~$A_n$. Multiplying by powers of~$y$, we obtain the equalities $y^{n+k+2} = x^ny^{k+2}$ of \myblueform-monomials for $1 \le k \le n-2$, the equality $y^{2n+1} = x^ny^{n+1}$ of two \myvioletform-monomials, and the equality $y^{2n+2} = x^ny^{n+2}$ of two \mycyanform-monomials. Similarly, multiplying by powers of $x$ yields the equalities $x^ky^{n+2} = x^{n+k}y^2$ of \mypinkform-monomials for $1 \le k \le n-1$, the equality $x^ny^{n+2} = x^{2n}y^2$ of two \mycyanform-monomials, and also the equality $x^{n+1}y^{n+2} = x^{2n+1}y^2$. Multiplying by $xy$ and keeping in mind that $f_4 \in I_n$, we see that $xy^{n+3}=x^{n+1}y^3=0$. All monomials divisible by one of these two also equal~$0$; in particular, $x^{n+1}y^{n+1} = x^{n+1}y^{n+2} = 0$ and hence $x^{2n+1}y^2 = 0$. 
This argument is illustrated in Figure~\ref{PictProofOfMain}: the equality $y^{n+2} = x^ny^2$ of \mygreenform-monomials implies the equality of all monomials located in the green angles with vertices in these monomials.

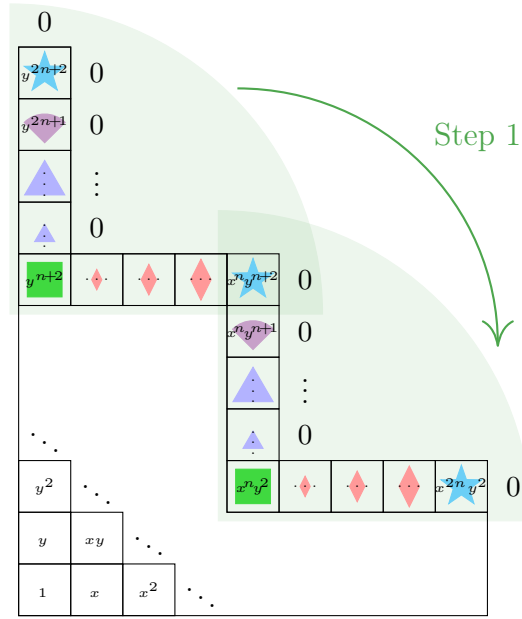
\begin{figure}[h]
\begin{tikzpicture}[x=0.75pt,y=0.75pt,yscale=-1,xscale=1]
\fontsize{4pt}{5pt}\selectfont
\def\d{13}
\def\myn{4}
\coordinate (e1) at (\d,0);
\coordinate (e2) at (0,-\d);
\colorlet{darkgreen}{green!50!black!70!white}
\node[circular sector, circular sector angle=90, draw, color=darkgreen, fill=darkgreen, opacity=0.1, rotate=-135, minimum size = 9*\d] (T) at ($4*(e1)+16*(e2)$) {};
\node[circular sector, circular sector angle=90, draw, color=darkgreen, fill=darkgreen, opacity=0.1, rotate=-135, minimum size = 9*\d] (T) at ($12*(e1)+8*(e2)$) {};
\foreach \x/\y in {0/0,0/1,1/0,2/0,1/1,0/2} {
    \drawcell{\x}{\y}{$\mymonom{\x}{\y}$}{nofill}{}{0};
}
\foreach \x/\y in {3/0,2/1,1/2,0/3} {
    \draw ($2*\x*(e1)+2*\y*(e2)$) node{\small $\ddots$};
}
\drawcell{0}{\numexpr 2+\myn \relax}{$y^{n\!+\!2}$}{nofill}{mygreen}{1};
\drawcell{\myn}{2}{$x^{\!n}\!y^{\!2}$}{nofill}{mygreen}{1};
\ifnum \myn>3 \foreach \y in {\numexpr\myn+3\relax,...,\numexpr2*\myn\relax} {
    \pgfmathsetmacro{\coeff}{(\y-3-\myn)/(\myn-3)}
    \drawcell{0}{\y}{$\vdots$}{nofill}{myblue}{\coeff};
}\fi
\ifnum \myn>3 \foreach \y in {3,...,\myn} {
    \pgfmathsetmacro{\coeff}{(\y-3)/(\myn-3)}
    \drawcell{\myn}{\y}{$\vdots$}{nofill}{myblue}{\coeff};
}\fi
\pgfmathsetmacro{\coeff}{(\myn-2)/(\myn-1)}
\drawcell{0}{\numexpr2*\myn+1\relax}{$y^{2n\!+\!1}$}{nofill}{myviolet}{\coeff};
\drawcell{\myn}{\numexpr\myn+1\relax}{$x^{\!n}\!y^{\!n\!+\!1}$}{nofill}{myviolet}{\coeff};
\drawcell{0}{\numexpr2*\myn+2\relax}{$y^{2n\!+\!2}$}{nofill}{mycyan}{1};
\drawcell{\myn}{\numexpr\myn+2\relax}{$x^{\!n}\!y^{\!n\!+\!2}$}{nofill}{mycyan}{1};
\drawcell{\numexpr2*\myn\relax}{2}{$x^{2n}y^2$}{nofill}{mycyan}{1};
%Pink
\foreach \x in {1,...,\numexpr\myn-1\relax} {
    \ifnum \myn>2 \pgfmathsetmacro{\coeff}{(\x-1)/(\myn-2)} \else \pgfmathsetmacro{\coeff}{1} \fi
    \drawcell{\x}{\numexpr\myn+2\relax}{$\ldots$}{nofill}{mypink}{\coeff};
}
\foreach \x in {\numexpr\myn+1\relax,...,\numexpr2*\myn-1\relax} {
    \ifnum \myn>2 \pgfmathsetmacro{\coeff}{(\x-\myn-1)/(\myn-2)} \else \pgfmathsetmacro{\coeff}{1} \fi
    \drawcell{\x}{2}{$\ldots$}{nofill}{mypink}{\coeff};
}
\draw ($\numexpr(4*\myn+2)\relax*(e1)+4*(e2)$) node{\small $0$};
\draw (-\d,\d) -- ++($\numexpr(2*2*\myn+2)\relax*(e1)$) -- ++($2*2*(e2)$) -- ++($\numexpr(-2*\myn-2)\relax*(e1)$) -- ++($\numexpr(2*\myn+2)\relax*(e2)$) -- ++($\numexpr(-2*\myn+2)\relax*(e1)$) -- ++($\numexpr(2*\myn)\relax*(e2)$) -- ++($-2*(e1)$) -- cycle;
%Zeros
\draw ($\numexpr(4*\myn+6)\relax*(e2)$) node{\small $0$};
\draw ($2*(e1)+\numexpr(2*\myn+6)\relax*(e2)$) node{\small $0$};
\draw ($2*(e1)+\numexpr(2*\myn+8)\relax*(e2)$) node{\small $\vdots$};
\draw ($2*(e1)+\numexpr(4*\myn+4)\relax*(e2)$) node{\small $0$};
\draw ($2*(e1)+\numexpr(4*\myn+2)\relax*(e2)$) node{\small $0$};
\draw ($\numexpr(2*\myn+2)\relax*(e1)+6*(e2)$) node{\small $0$};
\draw ($\numexpr(2*\myn+2)\relax*(e1)+8*(e2)$) node{\small $\vdots$};
\draw ($\numexpr(2*\myn+2)\relax*(e1)+\numexpr(2*\myn+2)\relax*(e2)$) node{\small $0$};
\draw ($\numexpr(2*\myn+2)\relax*(e1)+\numexpr(2*\myn+4)\relax*(e2)$) node{\small $0$};
\draw[-{>[length=10pt,width=12pt]}, thick, darkgreen] ($7.4*(e1)+19.4*(e2)$) to[out=0, in=270] node[above right = 2pt]{\normalsize Step 1} ($17.4*(e1)+9.4*(e2)$);
\end{tikzpicture}
\caption{Proof of Proposition~\ref{An_descr_prop}, Step 1}
\label{PictProofOfMain}
\end{figure}

{\bf Step 2}. We now use the relation $f_3 \in I_n$, which is equivalent to the equality $xy^{n+1} = x^{2n+1}$ of \myyellowform-monomials. As above, we multiply this equality by various monomials and obtain more equalities in~$A_n$: $x^ky^{n+1} = x^{2n+k}$ of \myorangeform-monomials for $2 \le k \le n-1$, $x^ky^{n+2} = x^{2n+k}y$ of \mypinkform-monomials for $1 \le k \le n-1$, $x^ny^{n+1} = x^{3n}$ of \myvioletform-monomials, $x^ny^{n+2} = x^{3n}y$ of \mycyanform-monomials, and $x^{3n+1} = x^{n+1}y^{n+1} = 0$. This argument is illustrated in Figure~\ref{PictProofOfMain2}: the equality of \myyellowform{}-monomials $xy^{n+1} = x^{2n+1}$ implies equalities of all corresponding elements in angles with vertices in these monomials, which are depicted as yellow rectangles. 
\end{proof}

\begin{figure}[ht]
\begin{tikzpicture}[x=0.75pt,y=0.75pt,yscale=-1,xscale=1]
\fontsize{4pt}{5pt}\selectfont
\def\d{13}
\def\myn{4}
\coordinate (e1) at (\d,0);
\coordinate (e2) at (0,-\d);
\colorlet{darkgreen}{green!50!black!70!white}
\node[circular sector, circular sector angle=90, draw, color=darkgreen, fill=darkgreen, opacity=0.1, rotate=-135, minimum size = 9*\d] (T) at ($4*(e1)+16*(e2)$) {};
\node[circular sector, circular sector angle=90, draw, color=darkgreen, fill=darkgreen, opacity=0.1, rotate=-135, minimum size = 9*\d] (T) at ($12*(e1)+8*(e2)$) {};
\colorlet{darkyellow}{yellow!80!black}
\draw[very thick,rounded corners = 5pt, darkyellow, fill=darkyellow, opacity=0.2] ($(-\d,\d)+2*(e1)+\numexpr(2*\myn+2)\relax*(e2)$) rectangle ++($\numexpr(2*\myn+3)\relax*(e1)+\numexpr(2*\myn-1)\relax*(e2)$);
\draw[very thick,rounded corners = 5pt, darkyellow, fill=darkyellow, opacity=0.2] ($(-\d,\d)+\numexpr(4*\myn+2)\relax*(e1)$) rectangle ++($\numexpr(2*\myn+3)\relax*(e1)+\numexpr(2*\myn-1)\relax*(e2)$);

\foreach \x/\y in {0/0,0/1,1/0,2/0,1/1,0/2} {
    \drawcell{\x}{\y}{$\mymonom{\x}{\y}$}{nofill}{}{0};
}
\foreach \x/\y in {3/0,2/1,1/2,0/3} {
    \draw ($2*\x*(e1)+2*\y*(e2)$) node{\small $\ddots$};
}
\drawcell{0}{\numexpr 2+\myn \relax}{$y^{n\!+\!2}$}{nofill}{mygreen}{1};
\drawcell{\myn}{2}{$x^{\!n}\!y^{\!2}$}{nofill}{mygreen}{1};
\drawcell{1}{\numexpr 1+\myn \relax}{$xy^{n\!+\!1}$}{nofill}{myyellow}{1};
\drawcell{\numexpr 1+2*\myn \relax}{0}{$x^{2n\!+\!1}$}{nofill}{myyellow}{1};
\ifnum \myn>3 \foreach \y in {\numexpr\myn+3\relax,...,\numexpr2*\myn\relax} {
    \pgfmathsetmacro{\coeff}{(\y-3-\myn)/(\myn-3)}
    \drawcell{0}{\y}{$\vdots$}{nofill}{myblue}{\coeff};
}\fi
\ifnum \myn>3 \foreach \y in {3,...,\myn} {
    \pgfmathsetmacro{\coeff}{(\y-3)/(\myn-3)}
    \drawcell{\myn}{\y}{$\vdots$}{nofill}{myblue}{\coeff};
}\fi
\pgfmathsetmacro{\coeff}{(\myn-2)/(\myn-1)}
\drawcell{0}{\numexpr2*\myn+1\relax}{$y^{2n\!+\!1}$}{nofill}{myviolet}{\coeff};
\drawcell{\myn}{\numexpr\myn+1\relax}{$x^{\!n}\!y^{\!n\!+\!1}$}{nofill}{myviolet}{\coeff};
\drawcell{\numexpr3*\myn\relax}{0}{$x^{3n}$}{nofill}{myviolet}{\coeff};
\drawcell{0}{\numexpr2*\myn+2\relax}{$y^{2n\!+\!2}$}{nofill}{mycyan}{1};
\drawcell{\myn}{\numexpr\myn+2\relax}{$x^{\!n}\!y^{\!n\!+\!2}$}{nofill}{mycyan}{1};
\drawcell{\numexpr2*\myn\relax}{2}{$x^{2n}y^2$}{nofill}{mycyan}{1};
\drawcell{\numexpr3*\myn\relax}{1}{$x^{3n}y$}{nofill}{mycyan}{1};
%Pink
\foreach \x in {1,...,\numexpr\myn-1\relax} {
    \ifnum \myn>2 \pgfmathsetmacro{\coeff}{(\x-1)/(\myn-2)} \else \pgfmathsetmacro{\coeff}{1} \fi
    \drawcell{\x}{\numexpr\myn+2\relax}{$\ldots$}{nofill}{mypink}{\coeff};
}
\foreach \x in {\numexpr\myn+1\relax,...,\numexpr2*\myn-1\relax} {
    \ifnum \myn>2 \pgfmathsetmacro{\coeff}{(\x-\myn-1)/(\myn-2)} \else \pgfmathsetmacro{\coeff}{1} \fi
    \drawcell{\x}{2}{$\ldots$}{nofill}{mypink}{\coeff};
}
\foreach \x in {\numexpr2*\myn+1\relax,...,\numexpr3*\myn-1\relax} {
    \ifnum \myn>2 \pgfmathsetmacro{\coeff}{(\x-2*\myn-1)/(\myn-2)} \else \pgfmathsetmacro{\coeff}{1} \fi
    \drawcell{\x}{1}{$\ldots$}{nofill}{mypink}{\coeff};
}
%Orange
\ifnum \myn>2
\foreach \x in {2,...,\numexpr\myn-1\relax} {
    \ifnum \myn>3 \pgfmathsetmacro{\coeff}{(\x-2)/(\myn-3)} \else \pgfmathsetmacro{\coeff}{1} \fi
    \drawcell{\x}{\numexpr\myn+1\relax}{$\ldots$}{nofill}{myorange}{\coeff};
}
\foreach \x in {\numexpr2*\myn+2\relax,...,\numexpr3*\myn-1\relax} {
    \ifnum \myn>3 \pgfmathsetmacro{\coeff}{(\x-2*\myn-2)/(\myn-3)} \else \pgfmathsetmacro{\coeff}{1} \fi
    \drawcell{\x}{0}{$\ldots$}{nofill}{myorange}{\coeff};
}\fi
%Figure
\draw (-\d,\d) -- ++($\numexpr(2*2*\myn+2)\relax*(e1)$) -- ++($2*2*(e2)$) -- ++($\numexpr(-2*\myn-2)\relax*(e1)$) -- ++($\numexpr(2*\myn+2)\relax*(e2)$) -- ++($\numexpr(-2*\myn+2)\relax*(e1)$) -- ++($\numexpr(2*\myn)\relax*(e2)$) -- ++($-2*(e1)$) -- cycle;
%Zeros
\draw ($\numexpr(4*\myn+6)\relax*(e2)$) node{\small $0$};
\draw ($2*(e1)+\numexpr(2*\myn+6)\relax*(e2)$) node{\small $0$};
\draw ($2*(e1)+\numexpr(2*\myn+8)\relax*(e2)$) node{\small $\vdots$};
\draw ($2*(e1)+\numexpr(4*\myn+4)\relax*(e2)$) node{\small $0$};
\draw ($2*(e1)+\numexpr(4*\myn+2)\relax*(e2)$) node{\small $0$};
\draw ($\numexpr(2*\myn+2)\relax*(e1)+6*(e2)$) node{\small $0$};
\draw ($\numexpr(2*\myn+2)\relax*(e1)+8*(e2)$) node{\small $\vdots$};
\draw ($\numexpr(2*\myn+2)\relax*(e1)+\numexpr(2*\myn+2)\relax*(e2)$) node{\small $0$};
\draw ($\numexpr(2*\myn+2)\relax*(e1)+\numexpr(2*\myn+4)\relax*(e2)$) node{\small $0$};
\draw ($\numexpr(6*\myn+2)\relax*(e1)$) node{\small $0$};
\draw ($\numexpr(4*\myn+2)\relax*(e1)+4*(e2)$) node{\small $0$};
\draw ($\numexpr(6*\myn+2)\relax*(e1)+2*(e2)$) node{\small $0$};
\draw ($2*(e1)+\numexpr(4*\myn+2)\relax*(e2)$) node{\small $0$};
\draw[-{>[length=10pt,width=12pt]}, thick, darkgreen] ($7.4*(e1)+19.4*(e2)$) to[out=0, in=270] node[above right = 2pt]{\normalsize Step 1} ($17.4*(e1)+9.4*(e2)$);
\draw[-{>[length=10pt,width=12pt]}, thick, darkyellow] ($12.4*(e1)+15.4*(e2)$) to[out=0, in=270] node[above right = 2pt]{\normalsize Step 2} ($24.4*(e1)+6.4*(e2)$);
\end{tikzpicture}
\caption{Proof of Proposition~\ref{An_descr_prop}, Step 2}
\label{PictProofOfMain2}
\end{figure}

\begin{remark}
Every monomial appearing in relations \textup{(\ref{green_eq})--(\ref{orange_eq})} occurs in exactly one of them.
\end{remark}

\begin{corollary} \label{crl_3alt}
For every monomial $x^iy^j \in \KK[x,y]$, exactly one of the following three alternatives holds.
\begin{enumerate}[label=\textup{(\arabic*)},ref=\textup{\arabic*}]
\item 
$x^iy^j = 0$ in~$A_n$. Moreover, this happens if and only if $x^iy^j$ is divisible by one of the monomials $y^{2n+3}, xy^{n+3}, x^{n+1}y^3, x^{2n+1}y^2, x^{3n+1}$.

\item
$x^iy^j \in B_n$.

\item
$x^iy^j \notin B_n$ and there is a unique monomial $x^{i'}y^{j'} \in B_n$ such that $x^iy^j = x^{i'}y^{j'}$ in~$A_n$. Moreover, in this case $x^iy^j$ and $x^{i'}y^{j'}$ occur in the same relation among~\textup{(\ref{green_eq})--(\ref{orange_eq})}.
\end{enumerate}
\end{corollary}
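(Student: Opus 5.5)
The plan is a purely combinatorial case analysis on the exponent pair $(i,j)$, using three earlier facts: the description of $B_n$ (equivalently $\Lambda_n$) from Corollary~\ref{crl_fin-dim}, the relations (\ref{green_eq})--(\ref{null_eq}) of Proposition~\ref{prop_relations}, and the Remark that each monomial in (\ref{green_eq})--(\ref{orange_eq}) occurs in exactly one relation.

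\textbf{Step 1: the three alternatives are mutually exclusive.} Elements of $B_n$ are linearly independent, hence nonzero in~$A_n$, so (1) and (2) exclude each other. Alternative (3) requires $x^iy^j \notin B_n$, so it excludes (2). It also forces $x^iy^j$ to equal a nonzero basis element, which excludes (1).

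\textbf{Step 2: the five listed monomials are zero.} Relation (\ref{null_eq}) gives $y^{2n+3}, xy^{n+3}, x^{n+1}y^3, x^{2n+1}y^2, x^{3n+1} = 0$ in~$A_n$. Hence every monomial divisible by one of them is zero.

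\textbf{Step 3: exhaustion.} Let $Z$ be the set of pairs $(i,j)$ divisible by none of the five zero monomials. Its complement in $Z$ outside $\Lambda_n$ is the finite set of pairs lying in $Z\setminus\Lambda_n$. Comparing $Z$ with $\Lambda_n$ (a Young diagram) by rows of~$j$, as in Figure~\ref{PictMain}, we check that every non-basis monomial not divisible by any of the five appears in one of (\ref{green_eq})--(\ref{orange_eq}):
\begin{itemize}
\item $j=0$: $x^{2n+1},\dots,x^{3n}$ (yellow, orange, violet);
\item $j=1$: $x^{2n+1}y,\dots,x^{3n}y$ (pink, cyan);
\item $j=2$: $x^{n}y^2,\dots,x^{2n}y^2$ (green, pink, cyan);
\item $3\le j\le n+2$: $x^ny^j$ (blue, violet, cyan);
\item $j=n+1$: $x^ky^{n+1}$ for $1\le k\le n$;
\item $j=n+2$: $x^ky^{n+2}$ for $1\le k\le n$;
\item $j\ge n+3$: the pure powers $y^j$ with $n+3\le j\le 2n+2$ lie in $B_n$; by the row $j=n+2$ above, $y^{n+2}$ itself lies outside $B_n$ and is handled there.
\end{itemize}
In each such relation at least one monomial lies in $B_n$: $y^{n+2+k}$ for $k \ge 1$, $y^{2n+1}$, $y^{2n+2}$, $x^ky^{n+2}$ with $k \le n-1$, $x^ky^{n+1}$ with $k \le n-1$, and $x^ny^{n+1}$ or $y^{2n+1}$. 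By the Remark this basis monomial is unique, so we take it as $x^{i'}y^{j'}$. Thus every monomial is either zero or in $B_n$ or satisfies (3).

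\textbf{Step 4: the ``if and only if'' in (1).} It remains to show that a monomial not divisible by any of the five is nonzero. Such a monomial is either in $B_n$ or equal to a basis element by Step 3, and basis elements are nonzero.

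The main obstacle is the bookkeeping in Step 3: confirming that the region between $\Lambda_n$ and the zero staircase is covered exactly by the listed relations, including the boundary values of $k$. The cleanest way is to verify the rows $j=0,1,2$, then the columns $i=1,\dots,n$ for $j \ge 3$, against the index ranges in Proposition~\ref{prop_relations}.
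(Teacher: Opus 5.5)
Your strategy is the right one and is what the paper intends. The paper gives no separate argument: it reads the corollary off from Corollary~\ref{crl_fin-dim}, Proposition~\ref{prop_relations}, the Remark and Figure~\ref{PictMain}. Your Steps~1, 2 and~4 are fine. Your rows $j=0$, $j=1$, $j=2$ and $3\le j\le n+2$ (where only $i=n$ survives) correctly enumerate the $4n+1$ monomials that are neither in $B_n$ nor divisible by one of the five zero monomials. These are exactly the non-basis monomials occurring in (\ref{green_eq})--(\ref{orange_eq}).

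Since the statement is pure bookkeeping, however, the errors in the second half of your Step~3 matter. The main one is your claim that $y^{n+2}$ lies outside $B_n$, which is false. It is divisible by none of $y^{2n+3}, xy^{n+3}, x^ny^2, x^{2n+1}$: it is the cell $(0,n+2)$ of the block $0\le i\le n-1$, $2\le j\le n+2$, and the lex-leading term of $f_2$ is $x^ny^2$, not $y^{n+2}$. In fact $y^{n+2}$ is precisely the basis representative of (\ref{green_eq}). Your list of representatives starts with $y^{n+2+k}$ for $k\ge 1$, so relation (\ref{green_eq}) gets no representative. As written, you therefore never show that $x^ny^2$ satisfies~(3), and by your own claim $y^{n+2}$ would be an unaccounted-for non-basis monomial. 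The fix is to allow $k=0$.

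There are also smaller corrections.
\begin{itemize}
\item In (\ref{violet_eq}) the only basis monomial is $y^{2n+1}$; $x^ny^{n+1}$ is divisible by $x^ny^2$, so it is not a basis monomial.
\item Your items for $j=n+1$ and $j=n+2$ list $x^ky^{n+1}$ and $x^ky^{n+2}$ with $1\le k\le n$. For $k\le n-1$ these lie in $B_n$; they are the representatives of (\ref{yellow_eq}), (\ref{orange_eq}) and (\ref{pink_eq}). The two non-basis ones, with $k=n$, are already covered by your item $3\le j\le n+2$, so these two items should be dropped.
\item Uniqueness of $x^{i'}y^{j'}$ does not follow from the Remark. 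It follows from linear independence of $B_n$: distinct elements of $B_n$ are distinct in $A_n$. The same fact shows that each of (\ref{green_eq})--(\ref{orange_eq}) contains exactly one basis monomial. The Remark only makes ``the relation containing $x^iy^j$'' well defined.
\end{itemize}
With these fixes your argument is complete.
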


For every monomial $g = x^iy^j \in \KK[x,y]$ we introduce the set
\begin{equation} \label{eqn_E(g)}
E(g) = \lbrace x^{i'}y^{j'} \in \KK[x,y] \mid g = x^{i'}y^{j'} \ \text{in} \ A_n \rbrace.
\end{equation}

\begin{corollary} \label{crl_E(g)}
Suppose $g = x^iy^j \in \KK[x,y]$ is a monomial such that $g \ne 0$ in~$A_n$. Then the following assertions hold.
\begin{enumerate}[label=\textup{(\arabic*)},ref=\textup{\arabic*}]
\item
If $g$ occurs in a relation among~\textup{(\ref{green_eq})--(\ref{orange_eq})}, then $E(g)$ consists of all monomials appearing in that relation.
\item
In the remaining cases, $E(g) = \lbrace g \rbrace$.
\end{enumerate}
\end{corollary}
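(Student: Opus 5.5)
The plan is to derive this corollary directly from Corollary~\ref{crl_3alt} together with the Remark preceding it, using that $B_n$ is a basis of $A_n$ (Corollary~\ref{crl_fin-dim}). Since $g \ne 0$ in~$A_n$, alternative~(1) of Corollary~\ref{crl_3alt} is excluded, and the same holds for every monomial in $E(g)$, because each of them equals $g \neq 0$ in~$A_n$. Hence every monomial $h$ in $E(g)$ falls under alternative~(2) or~(3). In either case $h$ determines a unique basis monomial $\beta(h) \in B_n$ with $h = \beta(h)$ in~$A_n$: for alternative~(2) take $\beta(h) = h$, and for alternative~(3) take the monomial provided there. Distinct elements of~$B_n$ are linearly independent, so they are distinct elements of~$A_n$. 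Consequently, for $h \in E(g)$ we get $\beta(h) = \beta(g)$, and conversely every monomial $h \ne 0$ with $\beta(h) = \beta(g)$ lies in $E(g)$. In other words, $E(g) = \{h \mid h \neq 0 \text{ in } A_n,\ \beta(h) = \beta(g)\}$.

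Next, I would check that each relation among (\ref{green_eq})--(\ref{orange_eq}) contains exactly one monomial of~$B_n$, namely the one with the smallest power of~$x$. Using the explicit description of $B_n$ from the proof of Corollary~\ref{crl_fin-dim}, these basis monomials are
\[
x^ny^{2}\ \text{or rather}\ y^{n+2},\quad y^{n+k+2},\quad y^{2n+1},\quad y^{2n+2},\quad x^ky^{n+2},\quad xy^{n+1},\quad x^ky^{n+1},
\]
one per relation. Every other monomial in a relation is divisible by $x^ny^2$ or by $x^{2n+1}$, so it lies outside~$B_n$. This is a routine inspection of exponents, and it fixes a bijection between relations and certain basis monomials.

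With this in hand, both assertions follow. \textbf{Case (1):} $g$ occurs in some relation~$R$. All monomials of~$R$ are equal in~$A_n$, so they lie in $E(g)$. Conversely, suppose $h \in E(g)$. If $h \in B_n$, then $h = \beta(g)$, which is the basis monomial of~$R$. If $h \notin B_n$, then alternative~(3) says that $h$ and $\beta(h) = \beta(g)$ occur in the same relation. By the Remark, $\beta(g)$ occurs in exactly one relation, namely~$R$, so $h \in R$. \textbf{Case (2):} $g$ occurs in no relation. Then $g$ is not under alternative~(3), so $g \in B_n$ and $\beta(g) = g$. Now take $h \in E(g)$ with $h \ne g$. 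Then $h \notin B_n$ by the uniqueness of $\beta$ values on basis elements, so by alternative~(3) the monomial $g = \beta(h)$ would occur in a relation, which is a contradiction. Hence $E(g) = \{g\}$.

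The only step requiring care is the "exactly one basis monomial per relation" check combined with the Remark's uniqueness. That check is where the exponent bookkeeping lives, but it is elementary.
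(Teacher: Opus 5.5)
Your argument is correct and is exactly the unpacking the paper has in mind: the paper states this corollary without proof, as an immediate consequence of Corollary~\ref{crl_3alt} and the preceding Remark, and your map $\beta$ carries out that deduction. The explicit check that each relation contains exactly one monomial of $B_n$ is not actually needed, since alternative~(3) together with the Remark already forces $\beta(g)$ to lie in the unique relation containing $g$; also, your display with ``$x^ny^2$ or rather $y^{n+2}$'' should simply read $y^{n+2}$.
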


Recall from Corollary~\ref{crl_fin-dim} that $A_n$ is finite-dimensional. The next proposition describes the main properties of~$A_n$.

\begin{proposition}
\label{An_descr_prop}
For every $n \ge 2$, the algebra $A_n$ is local and Gorenstein. Moreover, the maximal ideal $\mm_n \triangleleft A_n$ is generated by $x,y$, the residue field of~$A_n$ is~$\KK$, and $\Soc A_n = \langle y^{2n+2}\rangle$.
\end{proposition}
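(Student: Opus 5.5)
Locality, the generation of $\mm_n$ and the residue field come straight from the monomial basis; the substance is the socle computation, which I will do degree by degree in $y$ using Corollary~\ref{crl_3alt} and Corollary~\ref{crl_E(g)}.

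\emph{Local structure.} Let $\mm_n = (x,y)/I_n$ be the image of $(x,y)$ in $A_n$. By Corollary~\ref{crl_3alt}, $x^{3n+1}=0$ and $y^{2n+3}=0$ in $A_n$, so $x$ and $y$ are nilpotent. Since $A_n$ is commutative, every element of the ideal they generate is nilpotent. As $B_n$ is a basis containing $1$, we have $A_n = \KK\cdot 1 \oplus \mm_n$, and $\mm_n$ is spanned by $B_n\setminus\{1\}$. Hence every element outside $\mm_n$ has the form $c+u$ with $c\in\KK^\times$ and $u$ nilpotent, so it is invertible. Therefore $\mm_n$ is the unique maximal ideal, it is generated by $x,y$, and $A_n/\mm_n\simeq\KK$.

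\emph{Socle: easy inclusion.} An element lies in $\Soc A_n$ if and only if it is annihilated by $x$ and by $y$. For $y^{2n+2}$ this follows from (\ref{null_eq}): $y\cdot y^{2n+2}=y^{2n+3}=0$, and $x\cdot y^{2n+2}=xy^{2n+2}$ is divisible by $xy^{n+3}$, so it is $0$. Moreover $y^{2n+2}\in B_n$, so it is nonzero.

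\emph{Socle: reverse inclusion.} Take $s=\sum_{g\in B_n} c_g g\in\Soc A_n$. The products $xg$ and $yg$ are monomials, and each is $0$ or equal to a unique basis monomial, by Corollaries~\ref{crl_3alt} and~\ref{crl_E(g)}. The plan is to show:
\begin{itemize}
\item the map $g\mapsto yg$, restricted to basis monomials with $yg\neq0$, is injective into $B_n$;
\item the same holds for $g\mapsto xg$.
\end{itemize}
Granting this, $ys=0$ forces $c_g=0$ whenever $yg\ne0$, and $xs=0$ forces $c_g=0$ whenever $xg\ne0$. So only those $g\in B_n$ with $xg=yg=0$ survive. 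Inspecting $\Lambda_n$ with the help of Corollary~\ref{crl_3alt}, the only such monomial is $y^{2n+2}$. For example, $x^{2n}y$ satisfies $x\cdot x^{2n}y=x^{2n+1}y=x^{n+1}y^{n+2}$ by (\ref{pink_eq}) multiplied appropriately, which is divisible by $x^{n+1}y^3$ and hence $0$; but $y\cdot x^{2n}y=x^{2n}y^2$ is a nonzero $\hollowstar$-monomial. Hence $\Soc A_n=\langle y^{2n+2}\rangle$ is one-dimensional and $A_n$ is Gorenstein.

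\emph{Main obstacle: the injectivity check.} Injectivity fails only if two distinct basis monomials $g,g'$ have $yg=yg'\ne0$ (or the analogue with $x$). Then $yg$ and $yg'$ lie in the same set $E(\cdot)$, which by Corollary~\ref{crl_E(g)} is one of the lists (\ref{green_eq})--(\ref{orange_eq}). So I would run through each relation and check that, within it, at most one monomial is $y$ times a basis monomial, and likewise for $x$. For instance, in (\ref{cyan_eq}):
\begin{itemize}
\item $y^{2n+2}=y\cdot y^{2n+1}$;
\item $x^ny^{n+2}$ has $x^ny^{n+1}\notin B_n$ as its $y$-predecessor;
\item $x^{2n}y^2=y\cdot x^{2n}y$;
\item $x^{3n}y$ has $x^{3n}\notin B_n$ as its $y$-predecessor.
\end{itemize}
So two basis monomials, $y^{2n+1}$ and $x^{2n}y$, map to the same element under multiplication by $y$, and injectivity fails here.

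This breaks the naive argument, so I will instead compute $\ker(y)$ and $\ker(x)$ directly as spans of differences of basis monomials. In the case above, $y^{2n+1}-x^{2n}y$ is killed by $y$. Multiplying by $x$ gives $xy^{2n+1}-x^{2n+1}y$. The first term is $0$, being divisible by $xy^{n+3}$. For the second, $x^{2n+1}y=xy^{n+2}\neq0$ by (\ref{yellow_eq}) multiplied by $y$. So this difference is not killed by $x$.

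The full argument is therefore a finite linear-algebra computation organized by the relation classes: collect the $y$-kernel generators, which are differences arising from each relation class together with the single monomials $g$ with $yg=0$, and then impose $x$-annihilation. Keeping this bookkeeping uniform in $n$, including the edge cases $n=2$ (no $\triangle$- or $\ellipsesym$-monomials) and $k=1,n-1$, is the main work.
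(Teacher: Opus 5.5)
\textbf{There is a genuine gap.} The inclusion $\Soc A_n\subseteq\langle y^{2n+2}\rangle$ is the only nontrivial content of the proposition, and it is never actually proved.

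Your first argument rests on an injectivity claim. It fails not only for $y$, as you found, but also for $x$:
\begin{itemize}
\item $x\cdot y^{n+1}=xy^{n+1}=x^{2n+1}=x\cdot x^{2n}$ by (\ref{yellow_eq});
\item $x\cdot y^{n+2}=xy^{n+2}=x^{2n+1}y=x\cdot x^{2n}y$ by (\ref{pink_eq}) with $k=1$.
\end{itemize}
In particular, your sample assertion $x\cdot x^{2n}y=x^{n+1}y^{n+2}=0$ is false. Relation (\ref{pink_eq}) gives $x^{2n+1}y=xy^{n+2}\ne0$, which you yourself use one paragraph later.

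The fallback is only announced as a finite linear-algebra computation and then left as the main work. Showing that the single element $y^{2n+1}-x^{2n}y$ is not killed by $x$ does not rule out linear combinations of the generators of the kernel of multiplication by $y$. So, as written, nothing bounds $\dim\Soc A_n$ from above.

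\textbf{How to close it.} The missing computation is short if you impose annihilation by $x$ first, which is what the paper does. Going through $B_n$, multiplication by $x$ kills exactly $y^{n+3},\dots,y^{2n+2}$. It sends every other basis monomial to pairwise distinct basis monomials, except for the two collisions just listed. Hence its kernel is spanned by $y^{n+1}-x^{2n}$, $y^{n+2}-x^{2n}y$, and $y^{n+3},\dots,y^{2n+2}$.

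Multiplication by $y$ then sends these generators to:
\begin{itemize}
\item $y^{n+2}-x^{2n}y$;
\item $y^{n+3}-y^{2n+2}$, using $x^{2n}y^2=y^{2n+2}$ from (\ref{cyan_eq});
\item $y^{n+4},\dots,y^{2n+2}$;
\item $0$, for $y^{2n+2}$.
\end{itemize}
The nonzero images are linearly independent, since each has its own leading basis monomial. So only multiples of $y^{2n+2}$ survive.

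Your $y$-first route can also be completed. The kernel of multiplication by $y$ is spanned by $x^{n+k}y-x^ky^{n+1}$ for $0\le k\le n-1$, $x^{2n}y-y^{2n+1}$, $x^iy^{n+2}$ for $1\le i\le n-1$, and $y^{2n+2}$. Multiplication by $x$ maps all but the last of these to linearly independent elements. This works, but it involves $2n+1$ generators instead of $n+2$.

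Your treatment of locality, the residue field, and $y^{2n+2}\in\Soc A_n$ is fine and agrees with the paper.
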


\begin{proof}[Proof of Proposition~\ref{An_descr_prop}]
Recall from~(\ref{null_eq}) that $x^{3n+1} = y^{2n+3} = 0$, so the ideal $\mm_n \triangleleft A_n$ generated by $x,y$ consists of nilpotent elements. As $A_n = \KK \oplus \mm_n$, the algebra $A_n$ is local, $\mm_n$ is its maximal ideal, and the residue field of~$A_n$ is~$\KK$.

\smallskip

It remains to prove that $\Soc A_n = \langle y^{2n+2}\rangle$. First we consider the linear map ${A_n \to A_n}$, $S \mapsto S \cdot x$, and analyze the images of the monomials in~$B_n$ using the alternatives of Corollary~\ref{crl_3alt}. The only elements of~$B_n$ mapped to~$0$ are the \myblueform-monomials $y^{n+k+2}$ for ${1 \le k \le n-2}$, the \myvioletform-monomial $y^{2n+1}$, and the \mycyanform-monomial $y^{2n+2}$. Next, the images of $y^{n+1}$ and $x^{2n}$ are equal to the same \myyellowform-monomial $xy^{n+1}$. Similarly, the images of $y^{n+2}$ and $x^{2n}y$ are equal to the same \mypinkform-monomial $xy^{n+2}$. All the remaining monomials are mapped to pairwise distinct monomials in~$B_n$ different from $xy^{n+1}$ and~$xy^{n+2}$. Now let $S = \sum\limits_{(i,j)\in\Lambda_n} s_{ij}x^iy^j \in \Soc A_n$ be an arbitrary element. It follows from the above analysis that the condition $S \cdot x = 0$ implies $s_{ij} = 0$ for all monomials $x^iy^j \in B_n$ not belonging to the set $\lbrace y^{n+k+2} \mid 1 \le k \le n \rbrace \cup \lbrace x^{2n}, x^{2n}y \rbrace$, which is shaded in Figure~\ref{PictProofOfMain3}. Moreover, $s_{0,n+1}+s_{2n,0} = s_{0,n+2}+s_{2n,1} = 0$, which yields
\[S = s_{0,n+1}(y^{n+1} - x^{2n}) + s_{0,n+2}(y^{n+2} - x^{2n}y) + \sum_{k=1}^n s_{0,n+k+2}y^{n+k+2}.\]

\begin{figure}[h]
\begin{tikzpicture}[x=0.75pt,y=0.75pt,yscale=-1,xscale=1]
\fontsize{4pt}{5pt}\selectfont
\def\d{13}
\def\myn{4}
\coordinate (e1) at (\d,0);
\coordinate (e2) at (0,-\d);
\foreach \x/\y in {0/0,0/1,1/0,2/0,1/1,0/2} {
    \drawcell{\x}{\y}{$\mymonom{\x}{\y}$}{nofill}{}{0};
}
\foreach \x/\y in {3/0,2/1,1/2,0/3} {
    \draw ($2*\x*(e1)+2*\y*(e2)$) node{\small $\ddots$};
}
\pgfmathsetmacro{\coeff}{(\myn-2)/(\myn-1)}
\drawcell{0}{\numexpr2*\myn+1\relax}{$y^{2n\!+\!1}$}{precornfill}{myviolet}{\coeff};
\drawcell{\myn}{\numexpr\myn+1\relax}{$x^{\!n}\!y^{\!n\!+\!1}$}{nofill}{myviolet}{\coeff};
\drawcell{\numexpr3*\myn\relax}{0}{$x^{3n}$}{nofill}{myviolet}{\coeff};
\drawcell{0}{\numexpr2*\myn+2\relax}{$y^{2n\!+\!2}$}{precornfill}{mycyan}{1};
\drawcell{\myn}{\numexpr\myn+2\relax}{$x^{\!n}\!y^{\!n\!+\!2}$}{nofill}{mycyan}{1};
\drawcell{\numexpr2*\myn\relax}{2}{$x^{2n}y^2$}{nofill}{mycyan}{1};
\drawcell{\numexpr3*\myn\relax}{1}{$x^{3n}y$}{nofill}{mycyan}{1};
\drawcell{0}{\numexpr 2+\myn \relax}{$y^{n\!+\!2}$}{precornfill}{mygreen}{1};
\drawcell{\myn}{2}{$x^{\!n}\!y^{\!2}$}{nofill}{mygreen}{1};
\drawcell{1}{\numexpr 1+\myn \relax}{$xy^{n\!+\!1}$}{nofill}{myyellow}{1};
\drawcell{\numexpr 1+2*\myn \relax}{0}{$x^{2n\!+\!1}$}{nofill}{myyellow}{1};
\ifnum \myn>3 \foreach \y in {\numexpr\myn+3\relax,...,\numexpr2*\myn\relax} {
    \pgfmathsetmacro{\coeff}{(\y-3-\myn)/(\myn-3)}
    \drawcell{0}{\y}{$\vdots$}{precornfill}{myblue}{\coeff};
}\fi
\ifnum \myn>3 \foreach \y in {3,...,\myn} {
    \pgfmathsetmacro{\coeff}{(\y-3)/(\myn-3)}
    \drawcell{\myn}{\y}{$\vdots$}{nofill}{myblue}{\coeff};
}\fi
%Pink
\foreach \x in {1,...,\numexpr\myn-1\relax} {
    \ifnum \myn>2 \pgfmathsetmacro{\coeff}{(\x-1)/(\myn-2)} \else \pgfmathsetmacro{\coeff}{1} \fi
    \drawcell{\x}{\numexpr\myn+2\relax}{$\ldots$}{nofill}{mypink}{\coeff};
}
\foreach \x in {\numexpr\myn+1\relax,...,\numexpr2*\myn-1\relax} {
    \ifnum \myn>2 \pgfmathsetmacro{\coeff}{(\x-\myn-1)/(\myn-2)} \else \pgfmathsetmacro{\coeff}{1} \fi
    \drawcell{\x}{2}{$\ldots$}{nofill}{mypink}{\coeff};
}
\foreach \x in {\numexpr2*\myn+1\relax,...,\numexpr3*\myn-1\relax} {
    \ifnum \myn>2 \pgfmathsetmacro{\coeff}{(\x-2*\myn-1)/(\myn-2)} \else \pgfmathsetmacro{\coeff}{1} \fi
    \drawcell{\x}{1}{$\ldots$}{nofill}{mypink}{\coeff};
}
%Orange
\ifnum \myn>2
\foreach \x in {2,...,\numexpr\myn-1\relax} {
    \ifnum \myn>3 \pgfmathsetmacro{\coeff}{(\x-2)/(\myn-3)} \else \pgfmathsetmacro{\coeff}{1} \fi
    \drawcell{\x}{\numexpr\myn+1\relax}{$\ldots$}{nofill}{myorange}{\coeff};
}
\foreach \x in {\numexpr2*\myn+2\relax,...,\numexpr3*\myn-1\relax} {
    \ifnum \myn>3 \pgfmathsetmacro{\coeff}{(\x-2*\myn-2)/(\myn-3)} \else \pgfmathsetmacro{\coeff}{1} \fi
    \drawcell{\x}{0}{$\ldots$}{nofill}{myorange}{\coeff};
}\fi
%Socle
\drawcell{0}{\numexpr \myn+1 \relax}{$y^{n+1}$}{precornfill}{}{0};
\drawcell{\numexpr 2*\myn \relax}{1}{$x^{2n}y$}{precornfill}{}{0};
\drawcell{\numexpr 2*\myn \relax}{0}{$x^{2n}$}{precornfill}{}{0};
%Zeros
\draw ($\numexpr(4*\myn+6)\relax*(e2)$) node{\small $0$};
\draw ($2*(e1)+\numexpr(2*\myn+6)\relax*(e2)$) node{\small $0$};
\draw ($\numexpr(2*\myn+2)\relax*(e1)+6*(e2)$) node{\small $0$};
\draw ($\numexpr(6*\myn+2)\relax*(e1)$) node{\small $0$};
\draw ($\numexpr(4*\myn+2)\relax*(e1)+4*(e2)$) node{\small $0$};
\draw[very thick] (-\d,\d) -- ++($\numexpr(2*2*\myn+2)\relax*(e1)$) -- ++($2*2*(e2)$) -- ++($\numexpr(-2*\myn-2)\relax*(e1)$) -- ++($\numexpr(2*\myn+2)\relax*(e2)$) -- ++($\numexpr(-2*\myn+2)\relax*(e1)$) -- ++($\numexpr(2*\myn)\relax*(e2)$) -- ++($-2*(e1)$) -- cycle;
\draw ($-2*(e1)+\numexpr(4*\myn+6)\relax*(e2)$) node {\scriptsize $\bf \Lambda_n$};
\end{tikzpicture}
\caption{Proof of Proposition~\ref{An_descr_prop}, step with $S \mapsto S \cdot x$}
\label{PictProofOfMain3}
\end{figure}
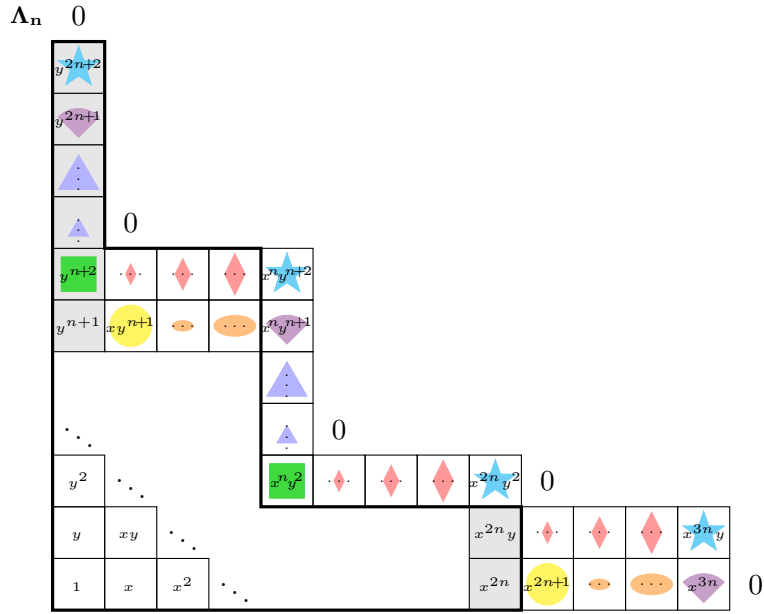

Next we use the condition $S\cdot y = 0$. Since
\begin{multline*}
0 = S\cdot y = s_{0,n+1}y^{n+2} - s_{0,n+1}x^{2n}y + s_{0,n+2}y^{n+3} - s_{0,n+2}x^{2n}y^2 + \sum_{k=1}^n s_{0,n+k+2}y^{n+k+3} =
\\
= - s_{0,n+1}x^{2n}y  + (s_{0,2n+1}- s_{0,n+2})y^{2n+2} +\sum_{k=-1}^{n-2} s_{0,n+k+2}y^{n+k+3},
\end{multline*}
we obtain $s_{0,2n+1}- s_{0,n+2} = 0$ and $s_{0,n+k+2} = 0$ for all $-1 \le k \le n-2$. In particular, for $k=0$ we get $s_{0,n+2}=0$, which yields $s_{0,2n+1}=0$. So $s_{0,n+k+2} = 0$ for all $-1 \le k \le n-1$ and we have $S = s_{0,2n+2}y^{2n+2}$. Thus $\Soc A_n = \langle y^{2n+2}\rangle$ and $A_n$ is Gorenstein. 
\end{proof}

\section{Proof of Theorem~\ref{groups_main_theor}}
\label{proof_groups_sec}

Recall the notation and results of~Section~\ref{sec_alg_desc}.

Let $\phi\colon A_n \to A_n$ be an algebra isomorphism and note that $\varphi$ is uniquely determined by the images of~$x,y$. Fix the expressions
$$\begin{aligned}
\phi(x) = \sum_{(i,j)\in \Lambda_n} a_{ij}x^iy^j \in A_n,\quad\quad
\phi(y) = \sum_{(i,j)\in \Lambda_n} b_{ij}x^iy^j \in A_n.
\end{aligned}$$
Since $x,y \in \mm_n \setminus \mm_n^2$ and $\varphi$ preserves all powers of~$\mm_n$, we have $\varphi(x), \varphi(y)  \in \mm_n \setminus \mm_n^2$. In particular, this implies that $a_{00} = b_{00} = 0$, at least one of $a_{10}, a_{01}$ is nonzero, and at least one of~$b_{10},b_{01}$ is nonzero.
We emphasize that $\phi(x), \phi(y)$ are regarded as elements of~$A_n$. In our arguments we will also work with the same polynomials regarded as elements of~$\KK[x,y]$, so we introduce separate notations for them:
\begin{equation}
\label{Phixy_eq}
\Phi_x = \sum_{(i,j)\in \Lambda_n \setminus\{(0,0)\}} a_{ij}x^iy^j \in \KK[x,y],\quad\quad
\Phi_y = \sum_{(i,j)\in \Lambda_n \setminus\{(0,0)\}} b_{ij}x^iy^j \in \KK[x,y].
\end{equation}
Consider the standard $\ZZ$-grading on $\KK[x,y]$ given by $\deg x^iy^j = i+j$ for all $i,j\ge 0$ and observe that all summands in $\Phi_x, \Phi_y$ have degree at least~$1$.

Our proof consists of 9 steps; see below.

Each of Steps~\ref{step1}--\ref{step8} follows the same general strategy: we take an element $f = \sum\limits_{i,j\ge0} c_{ij}x^iy^j \in I_n$ along with a monomial $g \in B_n$ and compute the coefficient $c$ of~$g$ in~$\varphi(f)$ regarded as an element of~$A_n$ and expressed as a linear combination of the monomials in~$B_n$.
Since $\varphi(f) = 0$ in~$A_n$, we have $c = 0$, which yields a relation on (some of) the coefficients $a_{ij}$ and~$b_{ij}$.
Recall from Corollary~\ref{crl_3alt} that the image in~$A_n$ of every monomial in~$\KK[x,y]$ is either zero or equal to an element in~$B_n$. Using Corollary~\ref{crl_E(g)}, we determine the set~$E(g)$ defined by~(\ref{eqn_E(g)}). Then the computation of~$c$ reduces to the following problem: for each $g' \in E(g)$ and each pair $(i,j)$ with~$c_{ij} \ne 0$, determine the coefficient of~$g'$ in the expansion of the polynomial $\Phi_x^i\Phi_y^j$. In most cases, we have $\deg(g')= i+j$ or $\deg(g') = i+j+1$, hence the latter problem is solved by using the following two straightforward observations:
\begin{enumerate}[label=\textup{(R\arabic*)},ref=\textup{R\arabic*}]
\item \label{R1} the product of $k$ monomials of degree at least~$1$ has degree~$k$ if and only if all monomials in the product have degree~$1$;
\item \label{R2} the product of $k$ monomials of degree at least~$1$ has degree~$k+1$ if and only if one of the factors has degree~$2$ and the remaining factors have degree~$1$.
\end{enumerate}

Finally, at Step~\ref{step9} we complete the proof.

We now proceed to our arguments. Steps~\ref{step1}--\ref{step8} are performed according to the general strategy described above, so we apply it without further explanation.

{\bf Step~\newstep\label{step1}}. $f = f_3 = x^{2n+1} - xy^{n+1}$, $g = x^{n+2}$. Then $E(g) = \lbrace x^{n+2} \rbrace$. The coefficient of~$x^{n+2}$ in $\Phi_x^{2n+1}$ is obviously~$0$ and that in~$\Phi_x\Phi_y^{n+1}$ is $a_{10}b_{10}^{n+1}$ by~(\ref{R1}), hence $a_{10}b_{10} = 0$.

{\bf Step~\newstep\label{step2}}. $f = f_2 = x^ny^2 - y^{n+2}$, $g = x^{n+2}$. Then again $E(g) = \lbrace x^{n+2} \rbrace$. By~(\ref{R1}), the coefficient of~$x^{n+2}$ in $\Phi_x^{n}\Phi_y^2$ is $a_{10}^nb_{10}^2$ and that in~$\Phi_y^{n+2}$ is $b_{10}^{n+2}$, hence $a_{10}^nb_{10}^2 - b_{10}^{n+2} = 0$. As $a_{10}b_{10} = 0$, we get $b_{10} = 0$. Since $\phi(y) \in \mm_n \setminus \mm_n^2$, it also follows that $b_{01} \ne 0$.

{\bf Step~\newstep\label{step3}}. $f = f_3 = x^{2n+1} - xy^{n+1}$, $g = y^{n+2}$. Then $g$ is a \mygreenform-monomial, hence $E(g) = \lbrace y^{n+2}, x^ny^2 \rbrace$. Obviously, the coefficients of~$y^{n+2}$ and~$x^ny^2$ in $\Phi_x^{2n+1}$ are~$0$. Since $b_{10} = 0$, by~(\ref{R1}) the coefficients of~$y^{n+2}$ and $x^ny^2$ in~$\Phi_x\Phi_y^{n+1}$ are~$a_{01}b_{01}^{n+1}$ and~$0$, respectively. It follows that $a_{01}b_{01} = 0$. As $b_{01} \ne 0$, we obtain $a_{01} = 0$.  Since $\phi(x) \in \mm_n \setminus \mm_n^2$, it also follows that $a_{10} \ne 0$.

{\bf Step~\newstep\label{step4}}. $f = f_2 = x^ny^2 - y^{n+2}$, $g = y^{n+2}$. Then again $E(g) = \lbrace y^{n+2}, x^ny^2 \rbrace$. Keeping in mind that $a_{01} = b_{10} = 0$ and using~(\ref{R1}), we find that the coefficients of~$y^{n+2}$ and $x^ny^2$ in~$\Phi_x^n\Phi_y^2$ are $0$ and $a_{10}^nb_{01}^2$, respectively, and the coefficients of~$y^{n+2}$ and $x^ny^2$ in~$\Phi_y^{n+2}$ are $0$ and~$b_{01}^{n+2}$, respectively. It follows that $a_{10}^nb_{01}^2 = b_{01}^{n+2}$. As $b_{01} \ne 0$, we get $a_{10}^n = b_{01}^n$.

{\bf Step~\newstep\label{step5}}. $f = f_3 = x^{2n+1} - xy^{n+1}$, $g = xy^{n+1}$. Then $g$ is a \myyellowform-monomial, hence $E(g) = \lbrace xy^{n+1}, x^{2n+1} \rbrace$. As $a_{01} = 0$, all monomials in $\Phi_x^{2n+1}$ not divisible by~$x^2$ have degree at least $1 + 2\cdot2n > n+2$, hence the coefficient of~$xy^{n+1}$ in~$\Phi_x^{2n+1}$ is~$0$. By~(\ref{R1}), the coefficient of $x^{2n+1}$ in~$\Phi_x^{2n+1}$ is~$a_{10}^{2n+1}$ and the coefficient of~$xy^{n+1}$ in $\Phi_x\Phi_y^{n+1}$ is~$a_{10}b_{01}^{n+1}$. As $b_{10} = 0$, all monomials in~$\Phi_x\Phi_y^{n+1}$ not divisible by~$y$ have degree at least $1+2(n+1) > 2n+1$, hence the coefficient of~$x^{2n+1}$ in~$\Phi_x\Phi_y^{n+1}$ is~$0$. It follows that $a_{10}^{2n+1} = a_{10}b_{01}^{n+1}$. As $a_{10} \ne 0$, we obtain $a_{10}^{2n} = b_{01}^{n+1}$. Comparing this with the result of Step~\ref{step4}, we also find that $b_{01} = a_{10}^n$ and $a_{10}^{n(n-1)} = 1$.

{\bf Step~\newstep\label{step6}}. $f = f_2 = x^ny^2 - y^{n+2}$, $g = x^{n+2}y$. Then $E(g) = \lbrace x^{n+2}y \rbrace$. Keeping in mind that $a_{01} = b_{10} = 0$ and using~(\ref{R2}), we find that the coefficient of~$x^{n+2}y$ in~$\Phi_x^n\Phi_y^2$ is $2a_{10}^nb_{01}b_{20}$ and that in~$\Phi_y^{n+2}$ is~$0$. As $a_{10},b_{01} \ne 0$ and $2$ is coprime to~$\Char \KK$, it follows that $b_{20} = 0$.

{\bf Step~\newstep\label{step7}}. $f = f_2 = x^ny^2-y^{n+2}$, $g = y^{n+3}$. If $n \ge 3$, then $g$ is a \myblueform-monomial, hence $E(g) = \lbrace y^{n+3}, x^ny^3 \rbrace$.
If $n = 2$, then $g$ is a \myvioletform-monomial, hence $E(g) = \lbrace y^5, x^2y^3, x^6 \rbrace = \lbrace y^{n+3}, x^ny^3, x^6 \rbrace$. Keeping in mind that $a_{01} = b_{10} = 0$ and using~(\ref{R2}), we find that the coefficient of~$y^{n+3}$ in~$\Phi_x^n\Phi_y^2$ is~$0$ and that in~$\Phi_y^{n+2}$ is~$(n+2)b_{01}^{n+1}b_{02}$. Similarly, the coefficient of~$x^ny^3$ in~$\Phi_x^n\Phi_y^2$ is~$2a_{10}^nb_{01}b_{02} + na_{10}^{n-1}a_{11}b_{01}^2$ and that in~$\Phi_y^{n+2}$ is~$0$. (For $n=2$, the conclusion for~$\Phi_y^{n+2}$ also relies on $b_{20} = 0$.) Finally, since $b_{10} = b_{20} = 0$, in the case $n=2$ every monomial in~$\Phi_y$ not containing~$y$ is divisible by~$x^3$, hence the coefficient of~$x^6$ in both~$\Phi_x^2\Phi_y^2$ and $\Phi_y^4$ is~$0$. As a result, we obtain the relation $(n+2)b_{01}^{n+1}b_{02} = 2a_{10}^nb_{01}b_{02} + na_{10}^{n-1}a_{11}b_{01}^2$. Using the result of Step~\ref{step4}, we replace $b_{01}^{n+1}$ by~$a_{10}^nb_{01}$ in the left-hand side, which yields $na_{10}^nb_{01}b_{02} = na_{10}^{n-1}a_{11}b_{01}^2$. Since $a_{10},b_{01} \ne 0$ and $n$ is coprime to~$\Char \KK$, we can divide the latter equality by~$na_{10}^{n-1}b_{01} \ne 0$ and obtain $a_{10}b_{02} = a_{11}b_{01}$.

{\bf Step~\newstep\label{step8}}. $f = f_3 = x^{2n+1} - xy^{n+1}$, $g = xy^{n+2}$. Then $g$ is a \mypinkform-monomial, hence $E(g) = \lbrace xy^{n+2}, x^{n+1}y^2, x^{2n+1}y \rbrace$. As $a_{01} = 0$, every monomial in~$\Phi_x^{2n+1}$ divisible by~$y$ has degree at least $2n+2>n+3$, therefore the coefficients of~$xy^{n+2}$ and~$x^{n+1}y^2$ in $\Phi_x^{2n+1}$ are~$0$. Keeping in mind that $a_{01} = b_{10} = 0$ and using~(\ref{R2}), we find that the coefficient of~$x^{2n+1}y$ in~$\Phi_x^{2n+1}$ is~$(2n+1)a_{10}^{2n}a_{11}$. Similarly, the coefficients of~$xy^{n+2}$ and $x^{n+1}y^2$ in~$\Phi_x\Phi_y^{n+1}$ are $a_{11}b_{01}^{n+1} + (n+1)a_{10}b_{01}^nb_{02}$ and~$0$, respectively. (For $n=2$, the conclusion for $x^{n+1}y^2$ also relies on $b_{20} = 0$.) As $b_{10} = b_{20} = 0$, all monomials in~$\Phi_y^{n+1}$ not divisible by~$y^2$ have degree at least $1+3n$, therefore the coefficient of $x^{2n+1}y$ in~$\Phi_x\Phi_y^{n+1}$ is~$0$. It follows that $(2n+1)a_{10}^{2n}a_{11} = a_{11}b_{01}^{n+1} + (n+1)a_{10}b_{01}^nb_{02}$. In view of the result of Step~\ref{step5}, the left-hand side equals $(2n+1)a_{11}b_{01}^{n+1}$. After dividing by $b_{01}^n \ne 0$, the equality is reduced to the form $2na_{11}b_{01} = (n+1)a_{10}b_{02}$. Combining this with the result of Step~\ref{step7}, we get 
\[\left\{\begin{aligned}
a_{11}b_{01}&=b_{02}a_{10},\\
2na_{11}b_{01}&=(n+1)b_{02}a_{10}.
\end{aligned}\right.\]
Since $\det \left(\begin{smallmatrix}1 & 1 \\ 2n & n+1\end{smallmatrix}\right) = -n+1$ is coprime to $\Char \KK$, it follows that $a_{11}=b_{02}=0$.

{\bf Step~\newstep\label{step9}}. Recall that $y^{2n+1}$ is a \myvioletform-monomial, so we have $y^{2n+1} = x^{3n}$ in~$A_n$. We will show that $\varphi(x^{3n}) = a_{10}^{3n} x^{3n}$ in~$A_n$. Clearly, all monomials in the expansion of $\Phi_x^{3n}$ have degree at least~$3n$. On the other hand, the set of monomials in~$\KK[x,y]$ that are nonzero in~$A_n$ and have degree at least $3n$ is $\lbrace x^{3n}, x^{3n-1}y, x^{3n}y \rbrace$ for $n \ge 3$ and $\lbrace x^6, x^5y, x^6y, x^4y^2, x^2y^4, y^6 \rbrace =  \lbrace x^{3n}, x^{3n-1}y, x^{3n}y, x^4y^2, x^2y^4, y^6 \rbrace$ for $n = 2$. As $a_{01} = 0$, using~(\ref{R1}) we find that the coefficients of $x^{3n}$ and $x^{3n-1}y$ in~$\Phi_x^{3n}$ are $a_{10}^{3n}$ and~$0$, respectively. As $a_{01} = a_{11} = 0$, using~(\ref{R2}) we find that the coefficient of~$x^{3n}y$ in~$\Phi_x^{3n}$ is~$0$, which proves the claim for~$n \ge 3$. If $n = 2$, in view of $a_{10} = 0$ and~(\ref{R1}) we easily see that the coefficients of the monomials $x^4y^2$, $x^2y^4$, $y^6$ in $\Phi_x^6$ are all zero, which yields the claim in this case as well. Recall from Step~\ref{step5} that $a_{10}^{n(n-1)} = 1$, so $(a_{10}^{3n})^{\frac{n-1}3} = 1$ for $n \equiv 1 \, (\mathrm{mod}\: 3)$ and $(a_{10}^{3n})^{n-1} = 1$ otherwise.

The proof of Theorem~\ref{groups_main_theor} is completed.

\section{Proof of Theorem~\ref{algebras_main_theor}}
\label{proof_algebras_sec}

The proof resembles that of Theorem~\ref{groups_main_theor} (see Section~\ref{proof_groups_sec}), yet it is simpler.
Recall the notation and results of~Section~\ref{sec_alg_desc}.

Let $\partial \colon A_n \to A_n$ be a derivation. By the Leibniz rule, $\partial$ is uniquely determined by the images of~$x,y$. Fix the expressions
$$\begin{aligned}
\partial(x) = \sum_{(i,j)\in \Lambda_n} a_{ij}x^iy^j \in A_n,\quad\quad
\partial(y) = \sum_{(i,j)\in \Lambda_n} b_{ij}x^iy^j \in A_n.
\end{aligned}$$
We emphasize that $\partial(x), \partial(y)$ are regarded as elements of~$A_n$. In our arguments we will also work with the same polynomials regarded as elements of~$\KK[x,y]$, so we introduce separate notations for them:
\begin{equation}
\label{Dxy_eq}
D_x = \sum_{(i,j)\in \Lambda_n} a_{ij}x^iy^j \in \KK[x,y],\quad\quad
D_y = \sum_{(i,j)\in \Lambda_n} b_{ij}x^iy^j \in \KK[x,y].
\end{equation}
For every $f \in \KK[x,y]$, we introduce the polynomial $D_f = \frac{\partial f}{\partial x}D_x + \frac{\partial f}{\partial y} D_y$, so that $\varphi (f) = D_f$ for $f$ and~$D_f$ regarded as elements of~$A_n$.

Our proof consists of 8 steps; see below.

Each of Steps~\ref{step1d}--\ref{step7d} follows the same general strategy: we take an element $f \in I_n$ along with a monomial $g \in B_n$ and compute the coefficient $c$ of~$g$ in~$\varphi(f)$ regarded as an element of~$A_n$ and expressed as a linear combination of the monomials in~$B_n$.
Since $\varphi(f) = 0$ in~$A_n$, we have $c = 0$, which yields a relation on (some of) the coefficients $a_{ij}$ and~$b_{ij}$.
Recall from Corollary~\ref{crl_3alt} that the image in~$A_n$ of every monomial in~$\KK[x,y]$ is either zero or equal to an element in~$B_n$. Using Corollary~\ref{crl_E(g)}, we determine the set~$E(g)$ defined by~(\ref{eqn_E(g)}). Then the computation of~$c$ reduces to the following problem: for each $g' \in E(g)$, determine the coefficient of~$g'$ in the expansion of the polynomial~$D_f$. In fact, we have $f \in \lbrace f_2, f_3, f_4 \rbrace$ in all cases; for convenience, we write down the corresponding polynomials $D_f$ as follows:
\begin{align*}
D_{f_2} &= nx^{n-1}y^2D_x + 2x^nyD_y - (n+2)y^{n+1}D_y; \\
D_{f_3} &= (2n+1)x^{2n}D_x - y^{n+1}D_x - (n+1)xy^nD_y; \\
D_{f_4} &= y^{n+3}D_x + (n+3)xy^{n+2}D_y.
\end{align*}

We now proceed to our arguments. Steps~\ref{step1d}--\ref{step7d} are performed according to the general strategy described above, so we apply it without further explanation.

\setcounter{num}{0}

{\bf Step~\newstep\label{step1d}}. $f = f_4 = xy^{n+3}$, $g = y^{n+3}$.  If $n \ge 3$, then $g$ is a \myblueform-monomial, hence $E(g) = \lbrace y^{n+3}, x^ny^3 \rbrace$. If $n = 2$, then $g$ is a \myvioletform-monomial, hence $E(g) = \lbrace y^5, x^2y^3, x^6 \rbrace = \lbrace y^{n+3}, x^ny^3, x^6 \rbrace$. The coefficients of~$y^{n+3}$ and~$x^ny^3$ in~$D_f$ are $a_{00}$ and~$0$, respectively. If $n = 2$, then the coefficient of~$x^6$ in~$D_f$ is~$0$. We conclude that $a_{00} = 0$.

{\bf Step~\newstep\label{step2d}}. $f = f_2 = x^ny^2 - y^{n+2}$, $g = x^{n+1}y$. Then $E(g) = \lbrace x^{n+1}y \rbrace$. The coefficient of~$x^{n+1}y$ in~$D_f$ is~$2b_{10}$. As $2$ is coprime to~$\Char \KK$, we obtain $b_{10} = 0$.

{\bf Step~\newstep\label{step3d}}. $f = f_3 = x^{2n+1} - xy^{n+1}$, $g = y^{n+2}$. Then $g$ is a \mygreenform-monomial, hence $E(g) = \lbrace y^{n+2}, x^ny^2 \rbrace$. The coefficient of~$y^{n+2}$ in~$D_f$ is~$-a_{01}$. The coefficient of $x^ny^2$ in~$D_f$ is~$0$ for $n \ge 3$ and $-3b_{10}$ for $n = 2$. Hence we get $a_{01} = 0$ for $n \ge 3$ and $a_{01} = -3b_{10}$ for $n = 2$. Using the result of Step~\ref{step2d}, we obtain $a_{01} = 0$ for all $n \ge 2$.

{\bf Step~\newstep\label{step4d}}. $f = f_2 = x^ny^2 - y^{n+2}$, $g = y^{n+2}$. Then again $E(g) = \lbrace y^{n+2}, x^ny^2 \rbrace$. The coefficients of~$y^{n+2}$ and~$x^ny^2$ in~$D_f$ are $-(n+2)b_{01}$ and~$na_{10}+2b_{01}$, respectively. It follows that $na_{10} = nb_{01}$. Since $n$ is coprime to~$\Char \KK$, we obtain $a_{10} = b_{01}$.

{\bf Step~\newstep\label{step5d}}. $f = f_3 = x^{2n+1} - xy^{n+1}$, $g = xy^{n+1}$. Then $g$ is a \myyellowform-monomial, hence $E(g) = \lbrace xy^{n+1}, x^{2n+1} \rbrace$. The coefficients of~$xy^{n+1}$ and $x^{2n+1}$ in~$D_f$ are $-a_{10}-(n+1)b_{01}$ and $(2n+\nobreak1)a_{10}$, respectively. It follows that $2na_{10} = (n+1)b_{01}$. Combining this with the result of Step~\ref{step4d} we obtain $(n-1)a_{10} = 0$.  Since $n-1$ is coprime to~$\Char \KK$, it follows that $a_{10} = b_{01} = 0$.

{\bf Step~\newstep\label{step6d}}. $f = f_2 = x^ny^2 - y^{n+2}$, $g = y^{n+3}$. If $n \ge 3$, then $g$ is a \myblueform-monomial, hence $E(g) = \lbrace y^{n+3}, x^ny^3 \rbrace$. If $n = 2$, then $g$ is a \myvioletform-monomial, hence $E(g) = \lbrace y^5, x^2y^3, x^6 \rbrace = \lbrace y^{n+3}, x^ny^3, x^6 \rbrace$. The coefficients of~$y^{n+3}$ and~$x^ny^3$ in~$D_f$ are $-(n+2)b_{02}$ and $na_{11} + 2b_{02}$, respectively, and for $n = 2$ the coefficient of $x^6$ in~$D_f$ is~$0$. It follows that $na_{11} = nb_{02}$. Since $n$ is coprime to~$\Char \KK$, we obtain $a_{11} = b_{02}$.

{\bf Step~\newstep\label{step7d}}. $f = f_3 = x^{2n+1} - xy^{n+1}$, $g = xy^{n+2}$. Then $g$ is a \mypinkform-monomial, hence $E(g) = \lbrace xy^{n+2}, x^{n+1}y^2, x^{2n+1}y \rbrace$. The coefficients of~$xy^{n+2}$, $x^{n+1}y^2$, and $x^{2n+1}y$ in~$D_f$ are $-a_{11} - (n+1)b_{02}$, $0$, and~$(2n+1)a_{11}$, respectively. It follows that $2na_{11} = (n+1)b_{02}$. Combining this with the result of Step~\ref{step6d} we obtain $(n-1)a_{11} = 0$. Since $n-1$ is coprime to~$\Char \KK$, it follows that $a_{11} = b_{02} = 0$.

{\bf Step~\newstep\label{step8d}}. Recall that $y^{2n+1}$ is a \myvioletform-monomial, so we have $y^{2n+1} = x^{3n}$. Let us show that $\partial(x^{3n}) = 0$. For $f = x^{3n}$ we have $D_f = 3nx^{3n-1}D_x$ in~$A_n$. We know from the previous steps that $a_{00} = a_{01} = a_{10} = a_{11} = 0$, therefore every monomial in~$D_x$ is divisible either by~$x^2$ or by~$y^2$. Then every monomial in $D_f$ is divisible either by~$x^{3n+1}$ or by~$x^{3n-1}y^2$, hence it equals~$0$ in~$A_n$ by relations~(\ref{null_eq}). Thus $\partial(x^{3n}) = 0$.

The proof of Theorem~\ref{algebras_main_theor} is completed.

\section{Non-isomorphic projective hypersurfaces arising from the algebra~\texorpdfstring{$A_2$}{A\_2}}
\label{sec_example}

In this section, we assume that $\KK$ is algebraically closed and~$\Char \KK = 0$.

Recall from the Introduction that the generalized Hassett-Tschinkel correspondence establihes a one-to-one correspondence between isomorphism classes of H-pairs $(A,U)$ with $\dim A = m + 1$ and equivalence classes of induced additive actions on hypersurfaces $X \subseteq \PP^m$ different from hyperplanes. Moreover, $X$ is nondegenerate if and only if $A$ is Gorenstein and $U$ is a complementary hyperplane. If $A$ is Gorenstein and has property~(AH), then the nondegenerate hypersurface~$X$ corresponding to an H-pair $(A,U)$ does not actually depend on~$U$ and hence is determined only by~$A$ itself. In~\cite[Remark~2.31]{AZa}, the authors of that paper note that they do not have an example of non-equivalent additive actions on projective hypersufaces $X_1, X_2$ that correspond to the same Gorenstein algebra~$A$. 
In this section, we provide an example of such hypersurfaces for the algebra~$A_2$ and write down their equations explicitly. Note that by~\cite[Theorem~2.32]{AZa} every nondegenerate projective hypersurface admits at most one induced additive action, which implies that the hypersurfaces $X_1, X_2$ are not isomorphic. 

As explained in the Introduction, given a Gorenstein algebra~$A$ with maximal ideal~$\mm$ and a complementary hyperplane $U \subseteq \mm$, the projective hypersurface $X$ corresponding to the H-pair $(A,U)$ can be realized as the closure of $p(\exp U)$ in~$\PP(A)$, where $p \colon A \setminus \lbrace 0 \rbrace \to \PP(A)$ is the natural projection. According to~\cite[Theorem~5.1]{AS}, the degree of~$X$ equals the maximal integer $d$ such that $\mm^d \nsubseteq U$. As $A$ is Gorenstein, this $d$ is determined by the property $\mm^d = \Soc A$. Then, given $z_0 \in \KK$ and $z \in \mm$, the image in $\PP(A)$ of the element $z_0 + z \in A = \KK \oplus \mm$ belongs to~$X$ if and only if
\begin{equation} \label{eqn_hseqn}
z_0^d \pi\left(\ln\left(1+\tfrac{z}{z_0}\right)\right) = 0,
\end{equation}
where $\pi\colon \mm \to \mm/U \simeq \KK$ 
is the projection; see~\cite[Proposition~2.9]{AZa}. So, formula~(\ref{eqn_hseqn}) gives the equation of~$X$ in~$\PP(A)$.

We now proceed to our example. Consider the algebra~$A = A_2$ with $\dim A_2 = 18$ and maximal ideal~$\mm = \mm_2$. Put \(\Lambda_2' = \Lambda_2\setminus \{(0,0),(0,5),(0,6)\};\) see the shaded region in Figure~\ref{PictA2}. Recall from Proposition~\ref{An_descr_prop} that $\Soc A_2 = \langle y^6 \rangle$ and take two complementary hyperplanes $U_1, U_2 \subseteq \mm_2$ defined as follows:
\[\begin{aligned}
U_1 &= \KK y^5 \oplus \bigoplus_{(i,j) \in \Lambda_2'} \KK x^iy^j, \\
U_2 &= \KK(y^5-y^6) \oplus \bigoplus_{(i,j) \in \Lambda_2'} \KK x^iy^j.
\end{aligned}\]
Clearly, $y^5 \in U_1$ and $y^5 \notin U_2$, so $U_1,U_2$ satisfy the conditions of Corollary~\ref{addact_cor}.

\begin{figure}[ht]
\begin{tikzpicture}[x=0.75pt,y=0.75pt,yscale=-1,xscale=1]
\fontsize{4pt}{5pt}\selectfont
\def\d{13}
\def\myn{2}
\coordinate (e1) at (\d,0);
\coordinate (e2) at (0,-\d);
\drawcell{0}{0}{$\mymonom{0}{0}$}{nofill}{}{0};
\foreach \x/\y in {0/1,1/0,2/0,1/1,0/2,0/3,1/2,2/1,3/0,3/1,4/0,4/0,4/1} {
    \drawcell{\x}{\y}{$\mymonom{\x}{\y}$}{precornfill}{}{0};
}
\drawcell{0}{\numexpr 2+\myn \relax}{$y^4$}{precornfill}{mygreen}{1};
\drawcell{\myn}{2}{$x^2y^2$}{nofill}{mygreen}{1};
\drawcell{1}{\numexpr 1+\myn \relax}{$xy^3$}{precornfill}{myyellow}{1};
\drawcell{\numexpr 1+2*\myn \relax}{0}{$x^5$}{nofill}{myyellow}{1};
\ifnum \myn>3 \foreach \y in {\numexpr\myn+3\relax,...,\numexpr2*\myn\relax} {
    \pgfmathsetmacro{\coeff}{(\y-3-\myn)/(\myn-3)}
    \drawcell{0}{\y}{$\vdots$}{nofill}{myblue}{\coeff};
}\fi
\ifnum \myn>3 \foreach \y in {3,...,\myn} {
    \pgfmathsetmacro{\coeff}{(\y-3)/(\myn-3)}
    \drawcell{\myn}{\y}{$\vdots$}{nofill}{myblue}{\coeff};
}\fi
\pgfmathsetmacro{\coeff}{(\myn-2)/(\myn-1)}
\drawcell{0}{\numexpr2*\myn+1\relax}{$y^5$}{nofill}{myviolet}{\coeff};
\drawcell{\myn}{\numexpr\myn+1\relax}{$x^2y^3$}{nofill}{myviolet}{\coeff};
\drawcell{\numexpr3*\myn\relax}{0}{$x^6$}{nofill}{myviolet}{\coeff};
\drawcell{0}{\numexpr2*\myn+2\relax}{$y^6$}{nofill}{mycyan}{1};
\drawcell{\myn}{\numexpr\myn+2\relax}{$x^2y^4$}{nofill}{mycyan}{1};
\drawcell{\numexpr2*\myn\relax}{2}{$x^4y^2$}{nofill}{mycyan}{1};
\drawcell{\numexpr3*\myn\relax}{1}{$x^6y$}{nofill}{mycyan}{1};
%Pink
\foreach \x in {1,...,\numexpr\myn-1\relax} {
    \ifnum \myn>2 \pgfmathsetmacro{\coeff}{(\x-1)/(\myn-2)} \else \pgfmathsetmacro{\coeff}{1} \fi
    \drawcell{\x}{\numexpr\myn+2\relax}{$\mymonom{\x}{4}$}{precornfill}{mypink}{\coeff};
}
\foreach \x in {\numexpr\myn+1\relax,...,\numexpr2*\myn-1\relax} {
    \ifnum \myn>2 \pgfmathsetmacro{\coeff}{(\x-\myn-1)/(\myn-2)} \else \pgfmathsetmacro{\coeff}{1} \fi
    \drawcell{\x}{2}{$x^3y^2$}{nofill}{mypink}{\coeff};
}
\foreach \x in {\numexpr2*\myn+1\relax,...,\numexpr3*\myn-1\relax} {
    \ifnum \myn>2 \pgfmathsetmacro{\coeff}{(\x-2*\myn-1)/(\myn-2)} \else \pgfmathsetmacro{\coeff}{1} \fi
    \drawcell{\x}{1}{$x^5y$}{nofill}{mypink}{\coeff};
}
%Orange
\ifnum \myn>2
\foreach \x in {2,...,\numexpr\myn-1\relax} {
    \ifnum \myn>3 \pgfmathsetmacro{\coeff}{(\x-2)/(\myn-3)} \else \pgfmathsetmacro{\coeff}{1} \fi
    \drawcell{\x}{\numexpr\myn+1\relax}{$\ldots$}{nofill}{myorange}{\coeff};
}
\foreach \x in {\numexpr2*\myn+2\relax,...,\numexpr3*\myn-1\relax} {
    \ifnum \myn>3 \pgfmathsetmacro{\coeff}{(\x-2*\myn-2)/(\myn-3)} \else \pgfmathsetmacro{\coeff}{1} \fi
    \drawcell{\x}{0}{$\ldots$}{nofill}{myorange}{\coeff};
}\fi
%Нули
\draw ($\numexpr(4*\myn+6)\relax*(e2)$) node{\small $0$};
\draw ($2*(e1)+\numexpr(2*\myn+6)\relax*(e2)$) node{\small $0$};
\draw ($\numexpr(2*\myn+2)\relax*(e1)+6*(e2)$) node{\small $0$};
\draw ($\numexpr(6*\myn+2)\relax*(e1)$) node{\small $0$};
\draw ($\numexpr(4*\myn+2)\relax*(e1)+4*(e2)$) node{\small $0$};
\draw[very thick] (-\d,\d) -- ++($\numexpr(2*2*\myn+2)\relax*(e1)$) -- ++($2*2*(e2)$) -- ++($\numexpr(-2*\myn-2)\relax*(e1)$) -- ++($\numexpr(2*\myn+2)\relax*(e2)$) -- ++($\numexpr(-2*\myn+2)\relax*(e1)$) -- ++($\numexpr(2*\myn)\relax*(e2)$) -- ++($-2*(e1)$) -- cycle;
\draw ($-2*(e1)+\numexpr(4*\myn+6)\relax*(e2)$) node {\scriptsize $\bf \Lambda_2$};
\draw[black!40!white] ($-2*(e1)+\numexpr(2*\myn+4)\relax*(e2)$) node {\scriptsize $\bf \Lambda_2'$};
\end{tikzpicture}
\caption{The algebra $A_2$}
\label{PictA2}
\end{figure}
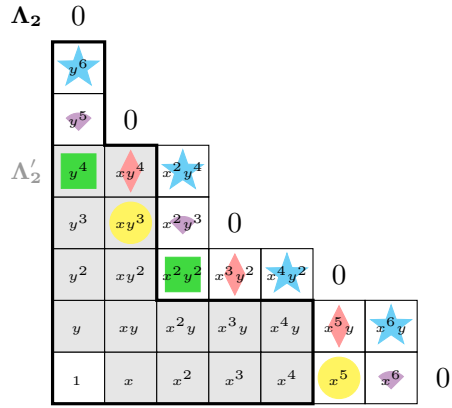

For each $(i,j) \in \Lambda_2$, let $z_{ij}$ be the coordinate function on~$A_2$ corresponding to the basis element~$x^iy^j$. Then
$U_1=\{z\in \mm_2 \mid z_{06}=0\}$ and
$U_2=\{z\in \mm_2 \mid z_{05}+z_{06}=0\}$. Thus, for an element $z = \sum\limits_{(i,j)\in \Lambda_2 \setminus \lbrace (0,0) \rbrace} z_{ij}x^iy^j \in \mm_2$, the projections $\pi_i\colon \mm_2 \to \KK$, $i=1,2$, are given by $\pi_1(z)=z_{06}$ and $\pi_2(z)=z_{05}+z_{06}$.
It remains to notice that $d=7$ and compute the coefficients $P_1$ and $P_2$ of $y^6$ and $y^5$, respectively, in the expression $z_0^7\ln(z_0+\nobreak z)=\sum\limits_{k=1}^7  \tfrac{(-1)^{k-1}}{k}z^kz_{00}^{7-k}$. Then the desired non-isomorphic hypersurfaces $X_1,X_2 \subseteq \PP^{17}$ are $X_1=\{P_1=0\}$ and $X_2= \{P_1+P_2=0\}$, where 
\scriptsize\begin{multline*}
P_1 = z_{01}z_{10}^6-
(\tfrac16z_{01}^6+\tfrac52z_{01}^4z_{10}^2+\tfrac52z_{01}^2z_{10}^4+5z_{01}z_{10}^4z_{20}+z_{10}^5z_{11})z_{00}+
\\
+(z_{01}^4z_{02}+z_{01}^4z_{20}+4z_{01}^3z_{10}z_{11}+6z_{01}^2z_{02}z_{10}^2+6z_{01}^2z_{10}^2z_{20}+ \\
4z_{01}z_{10}^3z_{11}+4z_{01}z_{10}^3z_{30}+6z_{01}z_{10}^2z_{20}^2+z_{02}z_{10}^4+z_{10}^4z_{21}+4z_{10}^3z_{11}z_{20})z_{00}^2-
\\
-(z_{01}^3z_{03}+z_{01}^3z_{21}+\tfrac32z_{01}^2z_{02}^2+3z_{01}^2z_{02}z_{20}+3z_{01}^2z_{10}z_{12}+3z_{01}^2z_{10}z_{30}+\tfrac32z_{01}^2z_{11}^2+\tfrac32z_{01}^2z_{20}^2+ \\
6z_{01}z_{02}z_{10}z_{11}+3z_{01}z_{03}z_{10}^2+3z_{01}z_{10}^2z_{21}+3z_{01}z_{10}^2z_{40}+6z_{01}z_{10}z_{11}z_{20}+6z_{01}z_{10}z_{20}z_{30}+z_{01}z_{20}^3+\\
\tfrac32z_{02}^2z_{10}^2+3z_{02}z_{10}^2z_{20}+z_{10}^3z_{12}+z_{10}^3z_{31}+\tfrac32z_{10}^2z_{11}^2+3z_{10}^2z_{11}z_{30}+3z_{10}^2z_{20}z_{21}+3z_{10}z_{11}z_{20}^2)z_{00}^3+
\\
+(z_{01}^2z_{04}+z_{01}^2z_{40}+2z_{01}z_{02}z_{03}+2z_{01}z_{02}z_{21}+2z_{01}z_{03}z_{20}+2z_{01}z_{10}z_{13}+2z_{01}z_{10}z_{31}+2z_{01}z_{11}z_{12}+\\
2z_{01}z_{11}z_{30}+2z_{01}z_{20}z_{21}+2z_{01}z_{20}z_{40}+z_{01}z_{30}^2+\tfrac13z_{02}^3+z_{02}^2z_{20}+2z_{02}z_{10}z_{12}+2z_{02}z_{10}z_{30}+z_{02}z_{11}^2+z_{02}z_{20}^2+2z_{03}z_{10}z_{11}+\\
z_{04}z_{10}^2+z_{10}^2z_{41}+2z_{10}z_{11}z_{21}+2z_{10}z_{11}z_{40}+2z_{10}z_{12}z_{20}+2z_{10}z_{20}z_{31}+2z_{10}z_{21}z_{30}+z_{11}^2z_{20}+2z_{11}z_{20}z_{30}+z_{20}^2z_{21})z_{00}^4-
\\
-(z_{02}z_{04}+z_{04}z_{20}+z_{01}z_{05}+z_{01}z_{41}+z_{02}z_{40}+\tfrac12z_{03}^2+z_{03}z_{21}+z_{10}z_{14}+z_{11}z_{13}+z_{11}z_{31}+\\
\tfrac12z_{12}^2+z_{12}z_{30}+z_{20}z_{41}+\tfrac12z_{21}^2+z_{21}z_{40}+z_{30}z_{31})z_{00}^5+
z_{06}z_{00}^6,
\end{multline*}\normalsize
\scriptsize\begin{multline*}
P_2=-\tfrac16z_{10}^6z_{00}+
(\tfrac15z_{01}^5+2z_{01}^3z_{10}^2+z_{10}^4z_{20})z_{00}^2-
(z_{01}^3z_{02}+z_{01}^3z_{20}+3z_{01}^2z_{10}z_{11}+3z_{01}z_{02}z_{10}^2+z_{10}^3z_{30}+\tfrac32z_{10}^2z_{20}^2)z_{00}^3+\\
+(z_{01}^2z_{03}+z_{01}^2z_{21}+z_{01}z_{02}^2+2z_{01}z_{02}z_{20}+2z_{01}z_{10}z_{12}+z_{01}z_{11}^2+2z_{02}z_{10}z_{11}+z_{03}z_{10}^2+z_{10}^2z_{40}+2z_{10}z_{20}z_{30}+\tfrac13z_{20}^3)z_{00}^4-\\
-(z_{01}z_{04}+z_{02}z_{03}+z_{02}z_{21}+z_{03}z_{20}+z_{10}z_{13}+z_{11}z_{12}+z_{20}z_{40}+\tfrac12z_{30}^2)z_{00}^5+
z_{05}z_{00}^6.
\end{multline*}\normalsize

The above expressions are obtained using the following Maple code:

\smallskip

\noindent
\verb|restart; |\\
\verb|with(Groebner): |\\
\verb|f1 := y^7: |\\
\verb|f2 := x^2*y^2-y^4: |\\
\verb|f3 := x^5-x*y^3: |\\
\verb|F := [f1, f2, f3]: |\\
\verb|GB := Basis(F, plex(x, y)): |\\
\verb|Z := y^6*z_06+y^5*z_05+y^4*z_04+y^3*z_03+y^2*z_02+y*z_01+x*y^4*z_14|\\
\verb|+x*y^3*z_13+x*y^2*z_12+x*y*z_11+x*z_10+x^2*y*z_21+x^2*z_20+x^3*y*z_31|\\
\verb|+x^3*z_30+x^4*y*z_41+x^4*z_40:|\\
\verb|Z_2 := simplify(NormalForm(Z*Z, GB, plex(x, y)), x, y): |\\
\verb|Z_3 := simplify(NormalForm(Z_2*Z, GB, plex(x, y)), x, y): |\\
\verb|Z_4 := simplify(NormalForm(Z_3*Z, GB, plex(x, y)), x, y): |\\
\verb|Z_5 := simplify(NormalForm(Z_4*Z, GB, plex(x, y)), x, y): |\\
\verb|Z_6 := simplify(NormalForm(Z_5*Z, GB, plex(x, y)), x, y): |\\
\verb|Z_7 := simplify(NormalForm(Z_6*Z, GB, plex(x, y)), x, y): |\\
\verb|LNz := 1/7*Z_7-1/6*Z_6*z_00+1/5*Z_5*z_00^2-1/4*Z_4*z_00^3+1/3*Z_3*z_00^4|\\
\hspace{2cm}\verb|-1/2*Z_2*z_00^5+Z*z_00^6: |\\
\verb|P_1 := coeff(LNz, y^6);|\\ 
\verb|P_2 := coeff(LNz, y^5);|

%%%%%%%%%%%%%%%%%%%%%%%%%%%%%%%%%%%%%%%%

\end{document}